\documentclass[oneside,english]{amsart}
\usepackage[T1]{fontenc}
\usepackage[latin9]{inputenc}
\usepackage{amsthm}
\usepackage{amstext}
\usepackage{amssymb}
\usepackage{esint}
\usepackage{graphicx}
\usepackage{enumerate}
\usepackage{amscd}


\makeatletter
\newtheorem{theorem}{Theorem}[section]
\newtheorem{lemma}[theorem]{Lemma}

\numberwithin{figure}{section}
\theoremstyle{definition}

\theoremstyle{remark}
\newtheorem{remark}[theorem]{Remark}

\numberwithin{equation}{section}
\makeatother

\usepackage{color}

\usepackage{babel}

	\DeclareMathOperator{\dist}{dist}
	\DeclareMathOperator{\loc}{loc}
	\DeclareMathOperator{\ess}{ess}
	
	\DeclareMathOperator{\cp}{cap}
    \DeclareMathOperator{\adj}{adj}
	
\begin{document}

\title[On regularity of weighted Sobolev homeomorphisms]{On regularity of weighted Sobolev homeomorphisms}

\author{Valerii Pchelintsev and Alexander Ukhlov}

\begin{abstract}
We study the weak regularity of mappings inverse to weighted Sobolev homeomorphisms $\varphi:\Omega\to\widetilde{\Omega}$, where $\Omega$ and $\widetilde{\Omega}$ are domains in $\mathbb R^n$. Using the weak regularity of inverse mappings we obtain the composition duality property of composition operators on weighted Sobolev spaces.
\end{abstract}
\maketitle
\footnotetext{\textbf{Key words and phrases:} Weighted Sobolev spaces, quasiconformal mappings.}
\footnotetext{\textbf{2010
Mathematics Subject Classification:} 46E35, 30C65.}

\section{Introduction}

Composition operators on weighted Sobolev spaces were introduced in \cite{UV08} in the frameworks of the generalized Reshetnyak problem \cite{VGR}. On the base of weighted composition operators in \cite{GU09}  were obtained sharp weighted Sobolev inequalities in H\"older singular domains. Note that in the geometric theory of composition operators on Sobolev spaces the significant role have properties of inverse mappings \cite{U93,U04,VU98}. It arises to the invertibility properties of quasiconformal mappings \cite{V71} and have applications in the non-linear elasticity theory \cite{B81,HK04,Sv88}.

In the present article we consider a weak regularity of mappings $\varphi^{-1}$ which are inverse to weighted Sobolev homeomorphisms $\varphi:\Omega\to\widetilde{\Omega}$, where $\Omega$ and $\widetilde{\Omega}$ are domains (open and connected sets) in $\mathbb R^n$, generate by the composition rule $\varphi^{\ast}(f)=f\circ\varphi$ a bounded composition operators
$$
\varphi^{\ast}: L^1_p(\widetilde{\Omega})\to L^1_q(\Omega,w), \,\,n-1<q\leq p<\infty.
$$
The first main result of the article states: {\it Let a homeomorphism $\varphi$ belong to the weighted Sobolev space $L^{1}_{q}(\Omega,w;\widetilde{\Omega})$, $n-1<q<\infty$, $w^q\in A_q$ and $w^{(1-n)\widetilde{q}}\in A_{\widetilde{q}}$, $\widetilde{q}=q/(q-(n-1))$, possess the Luzin N-property and have finite distortion. Then the inverse mapping $\varphi^{-1}:\widetilde{\Omega}\to\Omega$ belongs to the Sobolev space $W^1_{1,\loc}(\widetilde{\Omega};\Omega)$.}

This weak regularity result generalizes the previous works on regularity of non-weighted Sobolev homeomorphisms \cite{CHM,GU10,HK06,HKM06,U04,V12,Z69}. Remark that in the case $n=2$ we have $\widetilde{q}=q/(q-(n-1))=q/(q-1)=q'$ and so the condition $w^{(1-n)\widetilde{q}}\in A_{\widetilde{q}}$ can be missed.

By using this weak regularity result we obtain the weighted composition duality property:
{\it Let a homeomorphism $\varphi:\Omega\to\widetilde{\Omega}$ have the Luzin $N$-property and generate a bounded composition operator
$$
\varphi^{\ast}: L^1_p(\widetilde{\Omega})\to L^1_q(\Omega,w), \,\,n-1<q\leq p<\infty,
$$
where $w^q \in A_q$ and  $w^{(1-n)\widetilde{q}}\in A_{\widetilde{q}}$. Then the inverse mapping $\varphi^{-1}:\widetilde{\Omega}\to \Omega$ generates composition operator
$$
\left(\varphi^{-1}\right)^{\ast}: L^1_{\widetilde{q}}(\Omega,v)\to L^1_{\widetilde{p}}(\widetilde{\Omega}), \,\,(n-1)/q+1/\widetilde{q}=1,(n-1)/p+1/\widetilde{p}=1,
$$
where $v(x)=w(x)^{1-n}$ such that $v^{\tilde{q}} \in A_{\tilde{q}}$.
}

Let us remind that in the geometric analysis of PDE composition operators on Sobolev spaces arise in \cite{M69}. On the base of composition operators in \cite{GGu,GS82} was suggested the approach to Sobolev extension operators and to Sobolev embedding operators. The necessary and sufficient conditions on mappings generate bounded composition operators on Sobolev spaces were obtained in \cite{U93}. In subsequent works \cite{GGR95,VU98,VU02,VU04,VU05} the geometric theory of composition operators on Sobolev spaces was founded. The sufficient conditions on mappings generate bounded composition operators were reproved in \cite{K12} by using another technique. By using the theory of multipliers the composition operators on Sobolev spaces were considered in \cite{MSh}.

Using the geometric theory of composition operators on Sobolev spaces the problem of estimates of Neumann eigenvalues in non-convex domains was solved, see for example, \cite{GPU18,GU16,GU17}.

It is known \cite{VG75} that a mapping $\varphi:\Omega\to\widetilde{\Omega}$, where $\Omega,\widetilde{\Omega}$ are domains in $\mathbb R^n$, $n\geq 2$, generates a bounded composition operator $\varphi^{\ast}: L^1_n(\widetilde{\Omega})\to L^1_n(\Omega)$
if and only if $\varphi$ is a quasiconformal mapping. Let us remind that a mapping $\varphi:\Omega\to\widetilde{\Omega}$ is called quasiconformal if $\varphi\in W^1_{n,\loc}(\Omega;\widetilde{\Omega})$ and $|D\varphi(x)|^n\leq K_n |J(x,\varphi)|$ for almost all $x\in\Omega$. Because the inverse mapping $\varphi^{-1}$ is quasiconformal also \cite{V71} it follows
$$
\left(\varphi^{-1}\right)^{\ast}: L^1_{n}(\Omega)\to L^1_{n}(\widetilde{\Omega}).
$$

In the general case of Sobolev spaces $L^1_p(\widetilde{\Omega})$ and $L^1_q({\Omega})$,  $n-1<q\leq p<\infty$, the corresponding result was proved in \cite{U93}. It was proved that if a mapping $\varphi:\Omega\to\widetilde{\Omega}$ generates a bounded composition operator $\varphi^{\ast}: L^1_p(\widetilde{\Omega})\to L^1_q(\Omega)$ then the inverse mapping $\varphi^{-1}$ generates a bounded composition operator $\left(\varphi^{-1}\right)^{\ast}: L^1_{\widetilde{q}}(\Omega)\to L^1_{\widetilde{p}}(\widetilde{\Omega})$, $\widetilde{q}=q/(q-n+1)$, $\widetilde{p}=p/(p-n+1)$. In the present work we generalize this composition duality property in the case of weighted Sobolev spaces defined in domains of $\mathbb R^n$.

\section{Regularity of weighted Sobolev homeomorphisms}

\subsection{Weighted Sobolev spaces}

Let $E$ be a measurable subset of $\mathbb R^n$, $n \geq 2$, and $w: \mathbb R^n \to [0,+ \infty)$ be a locally integrable nonnegative function, i.e., a weight.
The weighted Lebesgue space $L_p(E,w)$ is defined as a Banach space of locally integrable functions
$f: E \to \mathbb R$ endowed with the following norm:
\begin{align*}
\|f\mid L_{p}(E,w)\|=\biggr(\int\limits _{E}|f(x)|^{p} w(x)^p\, dx\biggr)^{\frac{1}{p}},\,\, 1\leq p<\infty,&\\
\|f\mid L_{\infty}(E,w)\|=\ess\sup\limits _{E}|f(x)|w(x),\,\, p=\infty.&
\end{align*}

Let $\Omega$ be an open subset of $\mathbb R^n$, $n \geq 2$. The weighted Sobolev space $W^{1}_{p}(\Omega,w)$, $1\leq p\leq\infty$, \cite{HKM} is defined
as a normed space of locally integrable weakly differentiable functions
$f:\Omega\to\mathbb{R}$ endowed with the following norm:
\[
\|f\mid W^{1}_{p}(\Omega,w)\|=\|f\mid L_{p}(\Omega,w)\| + \|\nabla f\mid L_{p}(\Omega,w)\|.
\]
The seminormed weighted Sobolev space $L^{1}_{p}(\Omega,w)$, $1\leq p\leq\infty$, is defined
as a space of locally integrable weakly differentiable functions
$f:\Omega\to\mathbb{R}$ endowed with the following seminorm:
\[
\|f\mid L^{1}_{p}(\Omega,w)\|=\|\nabla f\mid L_{p}(\Omega,w)\|.
\]
In the non-weighted case $w=1$ we have the classical seminormed Sobolev space $L^{1}_{p}(\Omega)$.
The weighted Sobolev space $W^1_{p,\loc}(\Omega,w)$ is defined as follows: $f\in W^1_{p,\loc}(\Omega,w)$
if and only if $f\in W^1_p(U,w)$ for every open and bounded set $U\subset  \Omega$ such that
$\overline{U}  \subset \Omega$.

Note that smooth functions are dense in $L^{1}_{p}(\Omega)$, $1 \leq p< \infty$ (see, for example, \cite{Bur,M}). If $p=\infty$, we can only assert that for an arbitrary function $f\in L^{1}_{\infty}(\Omega)$  there exists a sequence of smooth functions $\{f_k\}$ which converges locally uniformly to $f$ and $\|f_k\mid L^{1}_{\infty}(\Omega)\| \rightarrow \|f\mid L^{1}_{\infty}(\Omega)\|$ (see, for example, \cite{Bur}).

Without additional restrictions the weighted Sobolev spaces $W^{1}_{p}(\Omega,w)$ are not necessarily Banach spaces (see, for example, \cite{Kufn}). However, if the weight $w : \mathbb R^n \to [0,+ \infty)$ is such that the function $w^p$ satisfy the Muckenhoupt $A_p$-condition $(w^p\in A_p)$, $1<p<\infty$:
\[
\sup_{B \subset \mathbb R^n} \left(\frac{1}{|B|}\int\limits_{B} w^p\right)^{\frac{1}{p}}
\left(\frac{1}{|B|}\int\limits_{B} w^{-p'}\right)^{\frac{1}{p'}}<+ \infty ,\,\,\frac{1}{p}+\frac{1}{p'}=1,
\]
then $W^{1}_{p}(\Omega,w)$ is a Banach space and smooth functions of the class $W^{1}_{p}(\Omega,w)$ are dense in $W^{1}_{p}(\Omega,w)$ (see, \cite{GU09,HKM}). In the rest of the paper, let $W^{1}_{p}(\Omega,w)$ ($L^{1}_{p}(\Omega,w)$) be Sobolev spaces with $w^p\in A_p$.

\begin{remark}
\label{AP}
By the definition it follows that $w^p\in A_p$ if and only if $w^{-p'}\in A_{p'}$, $1/p+1/p'=1$.
\end{remark}

We consider the Sobolev spaces as Banach spaces of equivalence classes of functions up to a set of a weighted  $p$-capacity zero \cite{HKM,K94,M}. Recall the definition of the weighted $p$-capacity \cite{HKM,K94,M}.
Suppose $\Omega$ is an open set in $\mathbb R^n$ and  $F\subset\Omega$ is a compact set. The $(p,w)$-capacity of $F$ with respect to $\Omega$ is defined by
\begin{equation*}
\cp_{p,w}(F;\Omega) =\inf\{\|\nabla f|L_p(\Omega,w)\|^p\},
\end{equation*}
where the infimum is taken over all functions $f\in C_0(\Omega)\cap L^1_p(\Omega)$ such that $f\geq 1$ on $F$.
If
$U \subset\Omega$
is an open set, we define
\begin{equation*}
\cp_{p,w}(U;\Omega)=\sup_F\{\cp_{p,w}
(F;\Omega)\,:\,F\subset U,\,\, F\,\,\text{is compact}\}.
\end{equation*}

In the case of an arbitrary set
$E\subset\Omega$
we define the inner $(p,w)$-capacity
\begin{equation*}
\underline{\cp}_{p,w}(E;\Omega)=\sup_F\{\cp_{p,w}(F;\Omega)\, :\,\,F\subset E\subset\Omega,\,\, F\,\,\text{is compact}\},
\end{equation*}
and the outer $(p,w)$-capacity
\begin{equation*}
\overline{\cp}_{p,w}(E;\Omega)=\inf_E\{\cp_{p,w}(U;\Omega)\, :\,\,E\subset U\subset\Omega,\,\, U\,\,\text{is open}\}.
\end{equation*}

If $\underline{\cp}_{p,w}(E;\Omega)=\overline{\cp}_{p,w}(E;\Omega)$ then the value
$$
\cp_{p.w}(E;\Omega)=\underline{\cp}_{p,w}(E;\Omega)=\overline{\cp}_{p,w}(E;\Omega)
$$
is called the $(p,w)$-capacity measure of the set $E\subset\Omega$ \cite{HKM,K94}.

Let us note that a function $f\in W^{1}_{p}(\Omega,w)$ can be redefined in a set of measure zero so that it is quasicontinuous, i.e. its restriction to the complement of a set of arbitrary small $(p,w)$-capacity is continuous. Moreover, the quasicontinuous representative possesses quasi-everywhere Lebesgue points with respect to either Lebesgue measure or the weighted measure $w$ \cite{K94}.

\subsection{Weighted Sobolev homeomorphisms}

Let $\Omega$ and $\widetilde{\Omega}$ be domains (open and connected sets) in the Euclidean space $\mathbb R^n$. Then a homeomorphism $\varphi : \Omega \to \widetilde{\Omega}$ belongs to the weighted Sobolev space $W^1_p(\Omega,w;\widetilde{\Omega})$ if its coordinate functions belong to the weighted Sobolev space $W^1_p(\Omega,w)$. In this case $\varphi\in W^1_{1,\loc}(\Omega;\widetilde{\Omega})$ (Sobolev homeomorphism) and so its formal Jacobi matrix $D \varphi(x)$ and its determinant
(Jacobian) $J(x, \varphi)$ are well defined at almost all points $x \in \Omega$. The norm $|D \varphi(x)|$ of the matrix $D \varphi(x)$ is the norm of the corresponding linear operator.

Let us remind the change of variable formula in the Lebesgue integral \cite{H93}. Let $\varphi: \Omega \to \widetilde{\Omega}$ be a Sobolev homeomorphism of the class $W^1_{1,\loc}(\Omega;\widetilde{\Omega})$. Then there exists a measurable set $S\subset \Omega$, $|S|=0$ such that  the mapping $\varphi:\Omega\setminus S \to \mathbb R^n$ has the Luzin $N$-property and the change of variable formula
\begin{equation}
\label{CVF}
\int\limits_E f\circ\varphi (x) |J(x,\varphi)|~dx=\int\limits_{\widetilde{E}\setminus \varphi(S)} f(y)~dy
\end{equation}
holds for every measurable set $E\subset \Omega$, $\widetilde{E}=\varphi(E)\subset \widetilde{\Omega}$, and every non-negative measurable function $f: \widetilde{\Omega}\to\mathbb R$.
The mapping $\varphi : \Omega \to \widetilde{\Omega}$ possesses the Luzin $N$-property if an image of any set of
measure zero has measure zero. Lipschitz mappings possess the Luzin $N$-property (see, for example, \cite{M,VGR}).

In the following theorem we give a characterization of weighted Sobolev homeomorphisms in the terms of compositions with Lipschitz functions.

\begin{theorem}
\label{loc-unif} Let $\varphi:\Omega\to\widetilde{\Omega}$ be a homeomorphism between two domains $\Omega, \widetilde{\Omega}\subset \mathbb R^n$. Then $\varphi$ generates a bounded composition operator
\[
\varphi^{\ast}:L^{1}_{\infty}(\widetilde{\Omega})\to L^{1}_{q}(\Omega,w),\,\,\,1 < q < \infty,
\]
if and only if $\varphi$ belongs to the weighed Sobolev space $L^{1}_{q}(\Omega,w;\widetilde{\Omega})$.
\end{theorem}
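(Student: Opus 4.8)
The plan is to prove both implications by converting the operator norm inequality into a pointwise estimate on the gradient, the bridge between the two being the chain rule for the composition of a (locally) Lipschitz function with a Sobolev mapping. The reverse implication is a direct integration of this pointwise bound, while the forward implication is obtained by feeding the coordinate projections into the bounded operator as test functions.

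For the sufficiency ($\Leftarrow$) I would first record that the hypothesis $w^q\in A_q$ forces $\varphi\in W^1_{1,\loc}(\Omega;\widetilde{\Omega})$. By Remark~\ref{AP} we have $w^{-q'}\in A_{q'}$, so $w^{-q'}$ is locally integrable, and Hölder's inequality gives, for bounded $U$ with $\overline{U}\subset\Omega$,
\[
\int_U |D\varphi|\,dx \le \Big(\int_U |D\varphi|^q w^q\,dx\Big)^{1/q}\Big(\int_U w^{-q'}\,dx\Big)^{1/q'}<\infty .
\]
Now fix $f\in L^1_\infty(\widetilde{\Omega})$; since $\|\nabla f\mid L_\infty(\widetilde{\Omega})\|<\infty$, the function $f$ has a locally Lipschitz representative with local Lipschitz constant controlled by $\|f\mid L^1_\infty(\widetilde{\Omega})\|=\|\nabla f\mid L_\infty(\widetilde{\Omega})\|$. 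The chain rule for a Lipschitz function composed with a Sobolev mapping then yields $f\circ\varphi\in W^1_{1,\loc}(\Omega)$ and the almost everywhere identity $\nabla(f\circ\varphi)(x)=Df(\varphi(x))\,D\varphi(x)$, whence
\[
|\nabla(f\circ\varphi)(x)|\le \|\nabla f\mid L_\infty(\widetilde{\Omega})\|\,|D\varphi(x)| \quad\text{for a.e. } x\in\Omega .
\]
Multiplying by $w(x)$ and integrating the $q$-th power produces
\[
\|\varphi^{\ast}(f)\mid L^1_q(\Omega,w)\|\le \|\varphi\mid L^1_q(\Omega,w;\widetilde{\Omega})\|\cdot\|f\mid L^1_\infty(\widetilde{\Omega})\| ,
\]
so $\varphi^{\ast}$ is bounded with operator norm at most $\|\varphi\mid L^1_q(\Omega,w;\widetilde{\Omega})\|$.

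For the necessity ($\Rightarrow$) I would test the bounded operator against the coordinate projections $\pi_i(y)=y_i$, $i=1,\dots,n$. Each $\pi_i$ is globally Lipschitz with constant gradient $\nabla\pi_i\equiv e_i$, hence $\pi_i\in L^1_\infty(\widetilde{\Omega})$ with $\|\pi_i\mid L^1_\infty(\widetilde{\Omega})\|=1$; no boundedness of $\widetilde{\Omega}$ is needed, since the seminorm only sees the gradient. As $\varphi^{\ast}(\pi_i)=\pi_i\circ\varphi=\varphi_i$, the $i$-th coordinate function of $\varphi$, boundedness gives
\[
\|\nabla\varphi_i\mid L_q(\Omega,w)\|=\|\varphi^{\ast}(\pi_i)\mid L^1_q(\Omega,w)\|\le K ,
\]
where $K$ is the operator norm. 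Since this holds for each $i$, all coordinate functions of $\varphi$ lie in $L^1_q(\Omega,w)$, that is, $\varphi\in L^1_q(\Omega,w;\widetilde{\Omega})$.

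The estimates above are routine; the step demanding genuine care is the chain rule invoked in the sufficiency direction. Because $f$ is merely Lipschitz, it is differentiable only almost everywhere by Rademacher's theorem, and one must exclude pathology on the set where $\varphi$ carries positive measure into the non-differentiability set of $f$. The standard resolution is the fact that a Sobolev mapping has vanishing gradient almost everywhere on the preimage of any null set, so that $Df(\varphi(x))\,D\varphi(x)$ is well defined a.e.\ and the chain rule identity, together with the membership $f\circ\varphi\in W^1_{1,\loc}(\Omega)$ and the pointwise inequality, is secured. A minor secondary point is that $f\circ\varphi$ is formed through the continuous representative of $f$, which is unambiguous here because $f$ is locally Lipschitz and $\varphi$ is a homeomorphism, so the composition is an honest continuous function.
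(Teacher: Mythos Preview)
Your necessity argument is identical to the paper's: both test the operator against the coordinate projections $y_i$.

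For sufficiency the paper takes a different route. It proves the inequality only for smooth $f\in L^1_\infty(\widetilde{\Omega})\cap C^\infty(\widetilde{\Omega})$, where the chain rule is elementary, and then passes to general $f\in L^1_\infty(\widetilde{\Omega})$ by approximation: choose smooth $f_k\to f$ locally uniformly with $\|f_k\mid L^1_\infty(\widetilde{\Omega})\|\to\|f\mid L^1_\infty(\widetilde{\Omega})\|$, observe that $\varphi^\ast(f_k)$ is bounded in the reflexive space $L^1_q(\Omega,w)$, extract a weakly convergent subsequence, identify the limit with $\varphi^\ast(f)$ via the local uniform convergence, and use weak lower semicontinuity of the norm. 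Your approach instead invokes the chain rule for a Lipschitz function composed with a $W^{1,1}_{\loc}$ map directly, which is more direct and sidesteps the functional-analytic machinery.

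One caution: the justification you give for this chain rule---``a Sobolev mapping has vanishing gradient almost everywhere on the preimage of any null set''---is the scalar statement and is \emph{false} as written for vector-valued $\varphi:\Omega\to\mathbb{R}^n$ (take $\varphi(x_1,x_2)=(x_1,0)$ and $N$ the $x_1$-axis). The Lipschitz--Sobolev chain rule you need is nonetheless true (Ambrosio--Dal~Maso, Marcus--Mizel), and all you actually use is the inequality $|\nabla(f\circ\varphi)|\le\operatorname{Lip}(f)\,|D\varphi|$ a.e., which follows from it; but the heuristic you offer does not prove it. The paper's approximation-plus-reflexivity argument is in effect one way of \emph{establishing} that inequality from the smooth case, which is why the paper can stay self-contained. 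Your route buys brevity at the cost of importing a nontrivial black box; the paper's route is longer but rests on more elementary ingredients.
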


\begin{proof}
\textit{Necessity}. Suppose the homeomorphism $\varphi$ generates a bounded composition operator
\[
\varphi^{\ast}:L^{1}_{\infty}(\widetilde{\Omega})\to L^{1}_{q}(\Omega,w),\,\,\,1 < q < \infty.
\]
Then there exists a constant $K_q(\Omega)<\infty$ such that the inequality
\begin{equation}
\label{eq1}
\|f\circ\varphi\mid L^{1}_{q}(\Omega,w)\| \leq K_q(\Omega) \|f\mid L^{1}_{\infty}(\widetilde{\Omega})\|
\end{equation}
holds for any function $f\in L^{1}_{\infty}(\widetilde{\Omega})$.

Now, substituting in inequality (\ref{eq1}) the test coordinate functions $f_i(y)=y_i\in L^{1}_{\infty}(\widetilde{\Omega})$, $i=1,\ldots,n$, we obtain
\[
\|\varphi^{*}_i\mid L^{1}_{q}(\Omega,w)\|= \|y_i\circ\varphi\mid L^{1}_{q}(\Omega,w)\|\leq K_q(\Omega) \|y_i\mid L^{1}_{\infty}(\widetilde{\Omega})\|= K_q(\Omega), \,\,i=1,...,n,
\]
because $\|y_i\mid L^{1}_{\infty}(\widetilde{\Omega})\|= 1$, $i=1,...,n$.
Hence we see that $\varphi$ belongs to $L^{1}_{q}(\Omega,w;\widetilde{\Omega})$.

\textit{Sufficiency}.
Since $\varphi$ belongs to the weighed Sobolev space $L^{1}_{q}(\Omega,w;\widetilde{\Omega})$ we denote by $K_q(\Omega):=\|\varphi\mid L^{1}_{q}(\Omega,w;\widetilde{\Omega})\|$.
Let $f \in L^{1}_{\infty}(\widetilde{\Omega}) \cap C^{\infty}(\widetilde{\Omega})$ be a smooth function of the class $L^{1}_{\infty}(\widetilde{\Omega})$. Then because $\varphi\in L^{1}_{q}(\Omega,w;\widetilde{\Omega})$  the composition $f\circ\varphi\in W^{1}_{1,\loc}(\Omega,\widetilde{\Omega})$ and we have
\begin{multline}
\label{eq2}
\|\varphi^{*}(f)\mid L^{1}_{q}(\Omega,w)\|
= \left(\int\limits_{\Omega}|\nabla (f \circ \varphi(x))|^q w^q(x)dx\right)^\frac{1}{q}\\
\leq \left(\int\limits_{\Omega} |D \varphi(x)|^q |\nabla f|^q(\varphi(x)) w^q(x)dx\right)^\frac{1}{q}
\leq \sup\limits_{x\in\Omega}|\nabla f|(\varphi(x))\left(\int\limits_{\Omega} |D \varphi(x)|^q w^q(x)dx\right)^\frac{1}{q}
\\
=\sup\limits_{y\in\widetilde{\Omega}}|\nabla f(y)|\left(\int\limits_{\Omega} |D \varphi(x)|^q w^q(x)dx\right)^\frac{1}{q}=
K_q(\Omega)\cdot
\|f\mid L^{1}_{\infty}(\widetilde{\Omega})\|.
\end{multline}

Now let $f\in L^{1}_{\infty}({\widetilde{\Omega}})$ be an arbitrary function. Then we consider a sequence of smooth functions $f_k\in L^{1}_{\infty}(\widetilde{\Omega})$
such that
\[
\lim_{k \to \infty} \|f_k\mid L^{1}_{\infty}(\widetilde{\Omega})\| =\|f\mid L^{1}_{\infty}(\widetilde{\Omega})\|
\]
and $\{f_k\}$ converges locally uniformly to $f$ in $\widetilde{\Omega}$ \cite{Bur}.

Denote by $g_k=f_k \circ \varphi$, $k=1,2,\ldots$. Then by the inequality (\ref{eq2}) the sequence $\{\varphi^{\ast}(f_k)\}$ converges locally uniformly to $\varphi^{\ast}(f)$ in $\Omega$ and is a bounded sequence in $L^{1}_{q}(\Omega,w)$. Since the Sobolev space $L^{1}_{q}(\Omega,w)$, $1<q<\infty$, is reflexive (see, for example, \cite{HKM,Y80}) then there exists a subsequence $\{f_{k_m}\}\in L^{1}_{q}(\Omega,w)$ which weakly converging to $f \in L^{1}_{q}(\Omega,w)$ and moreover
\[
\|\varphi^{\ast}(f)\mid L^{1}_{q}(\Omega,w)\| \leq \liminf_{m \to \infty} \|\varphi^{\ast}(f_{k_m})\mid L^{1}_{q}(\Omega,w)\|.
\]
Passing to limit as $m \to \infty$ in the inequality
\[
\|\varphi^{\ast}(f_{k_m})\mid L^{1}_{q}(\Omega,w)\| \leq K_q(\Omega) \|f_{k_m}\mid L^{1}_{\infty}(\widetilde{\Omega})\|,
\]
we have
\[
\|\varphi^{\ast}(f)\mid L^{1}_{q}(\Omega,w)\| \leq K_q(\Omega) \|f\mid L^{1}_{\infty}(\widetilde{\Omega})\|
\]
for any function $f\in L^{1}_{\infty}({\widetilde{\Omega}})$. The proof is complete.

\end{proof}

\subsection{Regularity of weighted Sobolev homeomorphisms}

On the base of Theorem~\ref{loc-unif} we prove the weak differentiability of mappings, which are inverse to weighted Sobolev homeomorphisms. This result generalizes a corresponding regularity results of non-weighted Sobolev homeomorphisms \cite{GU10}. Recall that a Sobolev homeomorphism $\varphi: \Omega \to \widetilde{\Omega}$ of the class $W^1_{1,\loc}(\Omega;\widetilde{\Omega})$ has finite distortion \cite{VGR} if
\[
D\varphi=0\,\,\, \text{a.e. on the set}\,\,\, Z=\{x \in \Omega:|J(x,\varphi)|=0\}.
\]

\begin{theorem}
\label{inverse}
Let a homeomorphism $\varphi:\Omega\to\widetilde{\Omega}$ between two domains $\Omega,\widetilde{\Omega}\subset\mathbb R^n$
belong to the weighted Sobolev space $L^{1}_{q}(\Omega,w;\widetilde{\Omega})$, $n-1<q<\infty$, $w(x)^{(1-n)\widetilde{q}}\in A_{\widetilde{q}}$, $\widetilde{q}=q/(q-n+1)$, possess the Luzin N-property and have finite distortion.
Then the inverse mapping $\varphi^{-1}:\widetilde{\Omega}\to\Omega$ belongs to the Sobolev space $W^1_{1,\loc}(\widetilde{\Omega};\Omega)$
and generates a bounded composition operator
$$
\left(\varphi^{-1}\right)^{\ast}: L^1_{\widetilde{q}}(\Omega,v)\to L^1_1(\widetilde{\Omega}),
$$
where the weight $v(x)=w(x)^{1-n}$ such that $v^{\widetilde{q}}\in A_{\widetilde{q}}$.
\end{theorem}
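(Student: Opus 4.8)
The plan is to reduce the weighted hypotheses to unweighted $(n-1)$-integrability of $D\varphi$, invoke the non-weighted inverse-regularity theory to get $\varphi^{-1}\in W^1_{1,\loc}$, and then verify the composition estimate by a change of variables combined with Hölder's inequality. Throughout set $Z=\{x\in\Omega:\ J(x,\varphi)=0\}$ and recall the elementary bound $|\adj A|\le |A|^{n-1}$ for the operator norm of the adjugate together with $A^{-1}=(\det A)^{-1}\adj A$. At every point $x$ where $\varphi$ is differentiable with $J(x,\varphi)\neq 0$ the inverse is differentiable at $y=\varphi(x)$ with $D\varphi^{-1}(y)=(D\varphi(x))^{-1}$, so that
\begin{equation*}
|D\varphi^{-1}(\varphi(x))|=\frac{|\adj D\varphi(x)|}{|J(x,\varphi)|}\le\frac{|D\varphi(x)|^{n-1}}{|J(x,\varphi)|},\qquad\text{i.e.}\qquad |D\varphi^{-1}(\varphi(x))|\,|J(x,\varphi)|\le |D\varphi(x)|^{n-1}.
\end{equation*}
This inequality, valid a.e.\ on $\Omega\setminus Z$, is the algebraic heart of both assertions.

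First I would descend from the weighted information to unweighted integrability. Since $\widetilde q=q/(q-n+1)$, the exponents $q/(n-1)$ and $\widetilde q$ are Hölder-conjugate, whence for every bounded open $U$ with $\overline U\subset\Omega$
\begin{equation*}
\int\limits_U|D\varphi|^{n-1}\,dx=\int\limits_U\bigl(|D\varphi|\,w\bigr)^{n-1}w^{-(n-1)}\,dx\le\left(\int\limits_U|D\varphi|^qw^q\,dx\right)^{\frac{n-1}{q}}\left(\int\limits_U w^{(1-n)\widetilde q}\,dx\right)^{\frac{1}{\widetilde q}}.
\end{equation*}
The first factor is finite because $\varphi\in L^1_q(\Omega,w;\widetilde\Omega)$, and the second because $w^{(1-n)\widetilde q}\in A_{\widetilde q}$ is locally integrable. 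Hence $\varphi\in W^1_{n-1,\loc}(\Omega;\widetilde\Omega)$ in the unweighted sense, and being a homeomorphism of finite distortion with the Luzin $N$-property it falls under the non-weighted inverse-regularity theory \cite{GU10}, which yields $\varphi^{-1}\in W^1_{1,\loc}(\widetilde\Omega;\Omega)$. I expect this passage---upgrading the a.e.\ pointwise differentiability of $\varphi^{-1}$, together with the integrability just obtained, to genuine weak differentiability (rather than mere membership in $BV_{\loc}$)---to be the main obstacle; it is precisely here that finite distortion and the Luzin $N$-property are indispensable.

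It then remains to establish the composition estimate. For a smooth $g\in L^1_{\widetilde q}(\Omega,v)$ the chain rule gives $|\nabla(g\circ\varphi^{-1})(y)|\le|\nabla g(\varphi^{-1}(y))|\,|D\varphi^{-1}(y)|$; applying the change of variable formula \eqref{CVF} (legitimate by the Luzin $N$-property, which also gives $|\varphi(Z)|=0$) and the pointwise inequality above on $\Omega\setminus Z$, I obtain
\begin{equation*}
\int\limits_{\widetilde\Omega}|\nabla(g\circ\varphi^{-1})(y)|\,dy\le\int\limits_{\Omega\setminus Z}|\nabla g(x)|\,|D\varphi^{-1}(\varphi(x))|\,|J(x,\varphi)|\,dx\le\int\limits_\Omega|\nabla g(x)|\,|D\varphi(x)|^{n-1}\,dx.
\end{equation*}
Writing $v=w^{1-n}$ and noting $\widetilde q'=q/(n-1)$, Hölder's inequality applied to the factorization $|\nabla g|\,|D\varphi|^{n-1}=(|\nabla g|\,v)\,(|D\varphi|^{n-1}v^{-1})$ gives
\begin{equation*}
\int\limits_\Omega|\nabla g|\,|D\varphi|^{n-1}\,dx\le\left(\int\limits_\Omega(|\nabla g|\,v)^{\widetilde q}\,dx\right)^{\frac{1}{\widetilde q}}\left(\int\limits_\Omega|D\varphi|^qw^q\,dx\right)^{\frac{n-1}{q}}=\|g\mid L^1_{\widetilde q}(\Omega,v)\|\cdot K_q(\Omega)^{n-1},
\end{equation*}
since $(|D\varphi|^{n-1}w^{n-1})^{q/(n-1)}=|D\varphi|^qw^q$ and $K_q(\Omega)=\|\varphi\mid L^1_q(\Omega,w;\widetilde\Omega)\|$. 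A density argument as in Theorem~\ref{loc-unif} extends the bound to all $g\in L^1_{\widetilde q}(\Omega,v)$, so $(\varphi^{-1})^\ast:L^1_{\widetilde q}(\Omega,v)\to L^1_1(\widetilde\Omega)$ is bounded; the standing hypothesis $v^{\widetilde q}=w^{(1-n)\widetilde q}\in A_{\widetilde q}$ guarantees that $L^1_{\widetilde q}(\Omega,v)$ is a legitimate weighted Sobolev space.
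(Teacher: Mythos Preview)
Your proof is correct but follows a genuinely different route from the paper's. The key divergence is in establishing $\varphi^{-1}\in W^1_{1,\loc}(\widetilde\Omega;\Omega)$. You use H\"older's inequality together with the local integrability of the $A_{\widetilde q}$ weight $w^{(1-n)\widetilde q}$ to obtain the \emph{unweighted} inclusion $\varphi\in W^1_{n-1,\loc}(\Omega;\widetilde\Omega)$, and then invoke the established non-weighted inverse-regularity theory (as in \cite{CHM,HKM06,GU10}) to conclude. The paper instead argues directly from the weighted data: it first proves, via Theorem~\ref{loc-unif} and the adjugate inequality, the estimate $\|f\mid L^1_1(\widetilde\Omega)\|\le K\,\|\varphi^{\ast}f\mid L^1_{\widetilde q}(\Omega,v)\|$ for Lipschitz $f$ on $\widetilde\Omega$, interprets this as boundedness of $(\varphi^{-1})^{\ast}$ on a suitable subspace, and then substitutes compactly supported coordinate-like test functions on $\Omega$ to read off $\varphi^{-1}\in W^1_{1,\loc}$. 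For the composition-operator bound itself you give a direct chain-rule/change-of-variables/H\"older computation for smooth $g$ followed by density, whereas the paper verifies the abstract weighted $(\widetilde q,1)$-quasiconformal criterion of \cite{GU09,UV08} by estimating the distortion integral via the same Hadamard-type inequality. Your approach is cleaner in that it separates the regularity and the operator bound and treats the former as a corollary of known theorems; the paper's approach is more self-contained and avoids importing the unweighted result as a black box. (A minor remark: for the regularity step itself only finite distortion is essential---the Luzin $N$-property enters later through the change-of-variables formula.)
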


\begin{proof}
Let a function $f \in L^{1}_{\infty}(\widetilde{\Omega})$, then by Theorem~\ref{loc-unif} the composition $f \circ \varphi$ belongs to the Sobolev space $L^{1}_{q}(\Omega,w)$. Because the mapping $\varphi$ has finite distortion, we can define
$\adj D\varphi(x)w(x)=0$ for almost all $x\in Z=\{x\in\Omega:J(x,\varphi)=0\}$. Then
\[
|J(x,\varphi)| |\nabla f(\varphi(x))| \leq |\nabla f \circ \varphi(x)| |\adj D\varphi(x)|\,\, \text{for almost all}\,\, x\in\Omega,
\]
since
\[
\min_{|h|=1}|D\varphi(x) \cdot h|=\frac{1}{\max\limits_{|h|=1}|(D\varphi(x))^{-1} \cdot h|}
\]
and
\[
(D\varphi(x))^{-1}=\frac{\adj D\varphi(x)}{J(x,\varphi)},\,\,J(x,\varphi)\neq 0.
\]
Hence by using the change of variables formula and Luzin $N$-property we obtain
\begin{multline*}
\|f\mid L^{1}_{1}(\widetilde{\Omega})\|
= \int\limits_{\widetilde{\Omega}}|\nabla f(y)|~dy
=\int\limits_{\Omega}|\nabla f(\varphi(x))||J(x,\varphi)|~dx\\
\leq \int\limits_{\Omega} |\nabla f \circ \varphi(x)| |\adj D\varphi(x)|~dx
\leq \int\limits_{\Omega} |\nabla f \circ \varphi(x)||D\varphi(x)|^{n-1}~dx\\
= \int\limits_{\Omega} |\nabla f \circ \varphi(x)| w(x)^{1-n} |D\varphi(x)|^{n-1} w(x)^{n-1}~dx.
\end{multline*}
Now by using the H\"older inequality with exponents $\widetilde{q}=\frac{q}{q-(n-1)}$ and  $\frac{q}{n-1}$ we have
\begin{multline*}
\|f\mid L^{1}_{1}(\widetilde{\Omega})\|
\leq \left(\int\limits_{\Omega}|\nabla f \circ \varphi(x)|^{\widetilde{q}}w(x)^{(1-n)\widetilde{q}}dx\right)^{\frac{1}{\widetilde{q}}}
\left(\int\limits_{\Omega}|D\varphi(x)|^{q}w(x)^{q}dx\right)^{\frac{1}{q}}\\
= \|\varphi \mid L^{1}_{q}(\Omega,w;\widetilde{\Omega})\|\cdot \|\varphi^{*}f \mid L^{1}_{\widetilde{q}}(\Omega,v)\|,
\end{multline*}
where by the assumption $v(x)^{\widetilde{q}}=w(x)^{(1-n)\widetilde{q}}\in A_{\widetilde{q}}$.

Thus, we have the lower estimate of the norm of the composition operator
\[
\|f\mid L^{1}_{1}(\widetilde{\Omega})\| \leq \|\varphi \mid L^{1}_{q}(\Omega,w;\widetilde{\Omega})\|\cdot\|\varphi^{*}f \mid L^{1}_{\widetilde{q}}(\Omega,v)\|,
\]
where functions  $g(x)=\varphi^{*}(f)$ belong to $L^{1}_{q}(\Omega,w)$. Therefore, we can conclude that the inverse operator
\[
(\varphi^{-1})^{*}:L^{1}_{\widetilde{q}}(\Omega,v) \cap L^{1}_{q}(\Omega,w) \to L^{1}_{1}(\widetilde{\Omega})
\]
is a bounded operator.

Now we fix a cut-off function $\eta \in C_0^{\infty}(B(0,2))$ which equals 1 on $B(0,1)$. Substituting in the inequality
\[
\|(\varphi^{-1})^{*}g \mid L^{1}_{1}(\widetilde{\Omega})\| \leq K \|g \mid L^{1}_{\widetilde{q}}(\Omega,v)\|
\]
the test functions
$$
g_i=(x-x_0)_i \eta(\frac{x-x_0}{r}),\,\, i=1,\ldots, n,
$$
where $x_0\in \Omega$, $r< \dist(x_0,\partial \Omega)$ and $(x-x_0)_i$ means the $i$th coordinate of the vector $x-x_0$, we obtain
that the inverse mapping $\varphi^{-1}:\widetilde{\Omega}\to\Omega$ belongs to the Sobolev space
$W^{1}_{1,\loc}(\widetilde{\Omega};\Omega)$.

The mapping $\varphi^{-1}:\widetilde{\Omega}\to\Omega$ of the class $W^{1}_{1,\loc}(\widetilde{\Omega};\Omega)$ generates a bounded composition operator
$$
\left(\varphi^{-1}\right)^{\ast}: L^1_{\widetilde{q}}(\Omega,v)\to L^1_1(\widetilde{\Omega})
$$
if $\varphi^{-1}$ is a mappings of finite distortion and \cite{GU09,UV08}
$$
\int\limits_{\widetilde{\Omega}}\left(\frac{|D\varphi^{-1}(y)|^{\widetilde{q}}}{|J(y,\varphi^{-1})| v(\varphi^{-1}(y))^{\widetilde{q}}}\right)^{\frac{1}{\widetilde{q}-1}}dy<\infty.
$$

Since $\varphi$ has the Luzin $N$-property then $|J(y,\varphi^{-1})| v(\varphi^{-1}(y))^{\widetilde{q}}>0$ for almost all $y\in \widetilde{\Omega}$ and so $\varphi^{-1}$ is a mapping of finite distortion.
Now by using the change of variable formula~\eqref{CVF} and the Hadamard type inequality \cite{U93}
\[
|D\varphi^{-1}(y)| \leq \frac{|D \varphi(x)|^{n-1}}{|J(x,\varphi)|}
\]
which holds for almost all points $y=\varphi(x)\in \widetilde{\Omega}$, we obtain
\begin{multline*}
\int\limits_{\widetilde{\Omega}}\left(\frac{|D\varphi^{-1}(y)|^{\widetilde{q}}}{|J(y,\varphi^{-1})| v(\varphi^{-1}(y))^{\widetilde{q}}}\right)^{\frac{1}{\widetilde{q}-1}}~dy \\
\leq
\int\limits_{\widetilde{\Omega}}\left(
\frac{|D \varphi(\varphi^{-1}(y))|^{(n-1)\widetilde{q}}}{|J(\varphi^{-1}(y),\varphi)|^{\widetilde{q}}}
\frac{1}{|J(y,\varphi^{-1})| v(\varphi^{-1}(y))^{\widetilde{q}}}\right)^{\frac{1}{\widetilde{q}-1}}dy
\\
=\int\limits_{\widetilde{\Omega}} \frac{|D \varphi(\varphi^{-1}(y))|^q |J(y,\varphi^{-1})|~dy}{v(\varphi^{-1}(y))^{\frac{q}{n-1}}}
\leq \int\limits_{\Omega}|D\varphi(x)|^{q}w(x)^{q}dx <+\infty,
\end{multline*}
since $\varphi$ belongs to the weighted Sobolev space $L^{1}_{q}(\Omega,w;\widetilde{\Omega})$, $1<q<\infty$. Therefore
we can conclude that $\varphi^{-1}$ generates a bounded
composition operator from $L^{1}_{\widetilde{q}}(\Omega,v)$ to $L^{1}_{1}(\widetilde{\Omega})$, where $v=w^{1-n}$ and $(n-1)/q+1/\widetilde{q}=1$.
\end{proof}

The conditions $w^q\in A_q$ and $w^{(1-n)\widetilde{q}}\in A_{\widetilde{q}}$ are hold, for example, in the case of power weights $w=|x|^{\alpha}$. Let us reformulate Theorem~\ref{inverse} for power weights.

\begin{theorem}
\label{inversepower}
Let a homeomorphism $\varphi:\Omega\to\widetilde{\Omega}$ between two domains $\Omega,\widetilde{\Omega}\subset\mathbb R^n$
belong to the weighted Sobolev space $L^{1}_{q}(\Omega,|x|^{\alpha};\widetilde{\Omega})$, where $-{n}/{q}<\alpha<{n}/q'$ for $n-1<q<\infty$, possess the Luzin N-property and have finite distortion.
Then the inverse mapping $\varphi^{-1}:\widetilde{\Omega}\to\Omega$ belongs to the Sobolev space $W^1_{1,\loc}(\widetilde{\Omega};\Omega)$
and generates a bounded composition operator
$$
\left(\varphi^{-1}\right)^{\ast}: L^1_{\widetilde{q}}(\Omega,|x|^{\alpha(1-n)})\to L^1_1(\widetilde{\Omega}),\,\,\,|x|^{\widetilde{q}\alpha(1-n)}\in A_{\widetilde{q}},
$$
where $\widetilde{q}=q/(q-n+1)$.
\end{theorem}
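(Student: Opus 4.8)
The plan is to obtain Theorem~\ref{inversepower} as a direct corollary of Theorem~\ref{inverse} by verifying that the power weight $w(x)=|x|^{\alpha}$ satisfies both Muckenhoupt conditions required there. Specifically, I must check that $w^q=|x|^{\alpha q}\in A_q$ and that $v^{\widetilde{q}}=w^{(1-n)\widetilde{q}}=|x|^{\alpha(1-n)\widetilde{q}}\in A_{\widetilde{q}}$, where $\widetilde{q}=q/(q-n+1)$. Once these two membership statements are established for the stated range $-n/q<\alpha<n/q'$, the conclusion, including the weak differentiability $\varphi^{-1}\in W^1_{1,\loc}(\widetilde{\Omega};\Omega)$ and the boundedness of the induced composition operator $(\varphi^{-1})^{\ast}:L^1_{\widetilde{q}}(\Omega,|x|^{\alpha(1-n)})\to L^1_1(\widetilde{\Omega})$, follows verbatim from Theorem~\ref{inverse} with $v=w^{1-n}$.

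The key tool is the classical criterion for power weights: the function $|x|^{\beta}$ belongs to the Muckenhoupt class $A_s$ on $\mathbb R^n$, $1<s<\infty$, if and only if $-n<\beta<n(s-1)$. First I would apply this with $s=q$ and $\beta=\alpha q$: the condition $-n<\alpha q<n(q-1)$ is exactly $-n/q<\alpha<n(q-1)/q=n/q'$, which is the hypothesis $-n/q<\alpha<n/q'$. Hence $w^q=|x|^{\alpha q}\in A_q$ holds precisely on the stated $\alpha$-range, so this first condition is equivalent to the hypothesis rather than an extra restriction.

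The second membership, $|x|^{\alpha(1-n)\widetilde{q}}\in A_{\widetilde{q}}$, I would verify by applying the same power-weight criterion with $s=\widetilde{q}$ and $\beta=\alpha(1-n)\widetilde{q}$, requiring $-n<\alpha(1-n)\widetilde{q}<n(\widetilde{q}-1)$. Using $\widetilde{q}=q/(q-n+1)$ one computes $\widetilde{q}-1=(n-1)/(q-n+1)$, so dividing the double inequality by $(1-n)\widetilde{q}<0$ (which reverses the inequalities) and simplifying should recast the constraint again in terms of $\alpha$. I expect the main obstacle to be precisely this algebraic reduction: one must check that the interval in $\alpha$ produced by the second condition either coincides with or contains the interval $(-n/q,\,n/q')$ coming from the first condition, so that no contradiction or hidden narrowing of the admissible range occurs. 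Carrying out the arithmetic, the factor $(1-n)$ flips the roles of the two endpoints, and after cancellation the second condition reduces to the same double inequality $-n/q<\alpha<n/q'$, confirming compatibility. With both weight conditions verified on the single range $-n/q<\alpha<n/q'$, invoking Theorem~\ref{inverse} completes the proof.
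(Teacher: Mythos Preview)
Your overall strategy matches the paper's proof exactly: reduce to Theorem~\ref{inverse} by verifying the two Muckenhoupt conditions via the standard power-weight criterion $|x|^{\beta}\in A_s \iff -n<\beta<n(s-1)$. Your lower-bound computation for the second condition is also correct: with $\widetilde{q}=q/(q-n+1)$ one has $(\widetilde{q}-1)/\widetilde{q}=(n-1)/q$, so the lower endpoint indeed becomes $-n/q$.

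However, your claim that ``the second condition reduces to the same double inequality $-n/q<\alpha<n/q'$'' is incorrect when $n\geq 3$. The upper bound coming from $|x|^{\alpha(1-n)\widetilde{q}}\in A_{\widetilde{q}}$ is
\[
\alpha<\frac{n}{(n-1)\widetilde{q}}=\frac{n(q-n+1)}{(n-1)q},
\]
whereas the first condition gives $\alpha<n(q-1)/q=n/q'$. A direct comparison shows $(q-1)(n-1)-(q-n+1)=q(n-2)$, so these two upper bounds coincide only for $n=2$; for $n\geq 3$ the second one is strictly smaller. The paper carries out precisely this comparison and concludes that the simultaneous validity of both the $A_q$ and $A_{\widetilde{q}}$ conditions is equivalent to
\[
-\frac{n}{q}<\alpha<\frac{n}{(n-1)\widetilde{q}},
\]
not to $-n/q<\alpha<n/q'$. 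So your ``expected obstacle'' is real, and its resolution is not that the two intervals coincide but that the second is (for $n\geq 3$) a proper subinterval of the first. You should redo the upper-bound arithmetic accordingly.
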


\begin{proof}
By Theorem~\ref{inverse} it is sufficient to prove that $w^q=|x|^{\alpha q}\in A_q$ and $w^{(1-n)\widetilde{q}}=|x|^{\alpha (1-n)\widetilde{q}} \in A_{\widetilde{q}}$.
It is known that $|x|^{\beta}\in A_q$ if and only if $-n<\beta <n(q-1)$. Hence $w^q=|x|^{\alpha q}\in A_q$ if and only if
$$
-n<\alpha q <n(q-1),
$$
and $w^{(1-n)\widetilde{q}}=|x|^{\alpha (1-n)\widetilde{q}} \in A_{\widetilde{q}}$ only if
$$
-n<\alpha (1-n)\widetilde{q} <n(\widetilde{q}-1).
$$
So we obtain
$$
-\frac{n}{q}<\alpha<n\frac{q-1}{q}\,\,\,\text{and}\,\,\,-\frac{n}{q}=\frac{n(1-\widetilde{q})}{\widetilde{q}(n-1)}<\alpha<\frac{n}{\widetilde{q}(n-1)}.
$$
Since
$$
n\frac{q-1}{q}=\frac{n}{\widetilde{q}(n-1)}=\frac{n}{\widetilde{q}}\,\,\,\text{if}\,\,\,n=2,\,\,\,
n\frac{q-1}{q}>\frac{n}{\widetilde{q}(n-1)}\,\,\,\text{if}\,\,\,n\geq 3,
$$
then power weights $w=|x|^{\alpha}$ are such that $w^q\in A_q$ and $w^{(1-n)\widetilde{q}}\in A_{\widetilde{q}}$ if and only if
$$
-\frac{n}{q}<\alpha<\frac{n}{\widetilde{q}(n-1)}.
$$
\end{proof}

In the case of the two-dimensional Euclidean space $\mathbb R^2$ we have $\widetilde{q}=q'=q/(q-1)$ and $w^q\in A_q$ if and only if $w^{-q'}\in A_{q'}$. Hence in this case we can reformulate Theorem~\ref{inverse} as follows:

\begin{theorem}
Let a homeomorphism $\varphi:\Omega\to\widetilde{\Omega}$ between two domains $\Omega,\widetilde{\Omega}\subset\mathbb R^2$ belong to the weighted Sobolev space $L^{1}_{q}(\Omega,w;\widetilde{\Omega})$, $1<q<\infty$, possess the Luzin N-property and have finite distortion.
Then the inverse mapping $\varphi^{-1}:\widetilde{\Omega}\to\Omega$ belongs to the Sobolev space $W^1_{1,\loc}(\widetilde{\Omega};\Omega)$
and generates a bounded composition operator
$$
\left(\varphi^{-1}\right)^{\ast}: L^1_{q'}(\Omega,v)\to L^1_1(\widetilde{\Omega}),
$$
where the weight $v(x)=w(x)^{-1}$ such that $v^{q'}\in A_{q'}$.
\end{theorem}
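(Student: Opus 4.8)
The plan is to obtain this statement as the special case $n=2$ of Theorem~\ref{inverse}, where the only subtlety is to check that the Muckenhoupt hypothesis appearing there is automatically satisfied in the plane. First I would record the arithmetic of the exponents. When $n=2$ the conjugate exponent of Theorem~\ref{inverse} collapses to $\widetilde{q}=q/(q-n+1)=q/(q-1)=q'$, and the associated weight $v(x)=w(x)^{1-n}$ becomes $v(x)=w(x)^{-1}$. Consequently the target operator $(\varphi^{-1})^{\ast}\colon L^1_{\widetilde{q}}(\Omega,v)\to L^1_1(\widetilde{\Omega})$ produced by Theorem~\ref{inverse} is exactly the operator $(\varphi^{-1})^{\ast}\colon L^1_{q'}(\Omega,v)\to L^1_1(\widetilde{\Omega})$ claimed here, with $v=w^{-1}$.

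Next I would verify the hypothesis $w^{(1-n)\widetilde{q}}\in A_{\widetilde{q}}$ that Theorem~\ref{inverse} requires. At $n=2$ one has $(1-n)\widetilde{q}=-q'$, so this condition reads $w^{-q'}\in A_{q'}$. Since $\varphi\in L^{1}_{q}(\Omega,w;\widetilde{\Omega})$ maps into a weighted Sobolev space, the standing convention on these spaces (fixed after Remark~\ref{AP}) gives $w^{q}\in A_{q}$. By Remark~\ref{AP}, $w^{q}\in A_{q}$ holds if and only if $w^{-q'}\in A_{q'}$; hence $v^{q'}=w^{-q'}\in A_{q'}$ holds automatically. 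This is precisely the reason, noted in the introduction, that the intrinsic $A_{\widetilde{q}}$-assumption may be dropped from the hypotheses in dimension two.

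With the exponent identifications established and the Muckenhoupt condition verified to hold for free, every hypothesis of Theorem~\ref{inverse} is met for $n=2$: the homeomorphism lies in $L^{1}_{q}(\Omega,w;\widetilde{\Omega})$, has the Luzin $N$-property, and has finite distortion. Invoking that theorem then yields $\varphi^{-1}\in W^{1}_{1,\loc}(\widetilde{\Omega};\Omega)$ together with the bounded composition operator $(\varphi^{-1})^{\ast}\colon L^1_{q'}(\Omega,v)\to L^1_1(\widetilde{\Omega})$, $v=w^{-1}$, as asserted. The only point demanding any care—rather than a genuine obstacle—is the identity $(1-n)\widetilde{q}=-q'$, valid exactly at $n=2$, which is what allows Remark~\ref{AP} to convert the standing $A_{q}$-condition into the required $A_{q'}$-condition.
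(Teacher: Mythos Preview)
Your proposal is correct and follows exactly the paper's approach: the paper presents this theorem as a direct reformulation of Theorem~\ref{inverse} in the case $n=2$, noting only that $\widetilde{q}=q/(q-1)=q'$ and that $w^{q}\in A_{q}$ if and only if $w^{-q'}\in A_{q'}$ (Remark~\ref{AP}), which is precisely the reduction you carry out.
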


\section{Composition operators on weighted Sobolev spaces}

\subsection{Composition operators and weighted quasiconformal mappings}

Let $\Omega$ and $\widetilde{\Omega}$ be domains in the Euclidean space $\mathbb R^n$, $n\geq 2$. Then
a homeomorphism $\varphi:\Omega\to \widetilde{\Omega}$ is called $w$-weighted $(p,q)$-quasiconformal, $1<q\leq p<\infty$,  if $\varphi$ belongs
to the Sobolev space $W^{1}_{1,\loc}(\Omega;\widetilde{\Omega})$, has finite distortion,
\[
K_{p,q}(\varphi;\Omega)= \left(\int\limits_{\Omega}\left(\frac{|D\varphi(x)|^p w(x)^p }{|J(x,\varphi)|}\right)^{\frac{q}{p-q}}dx\right)^{\frac{p-q}{pq}}<\infty, \,\,\text{if}\,\,1<q<p<\infty,
\]
and
\[
\hskip 2.2cm K_{p,p}(\varphi;\Omega)=\ess\sup\limits_{x\in\Omega}\left(\frac{|D\varphi(x)|^p w(x)^p }{|J(x,\varphi)|}\right)^{\frac{1}{p}}<\infty, \,\,\text{if}\,\,1<q=p<\infty.
\]
In the case $p=q$ such mappings are called as $w$-weighted $p$-quasiconformal mappings.

Let $\Omega$ and $\widetilde{\Omega}$ be domains in $\mathbb R^n$. Then a homeomorphism $\varphi:\Omega\to\widetilde{\Omega}$ induces a bounded composition
operator
\[
\varphi^{\ast}:L^1_p(\widetilde{\Omega})\to L^1_q(\Omega,w),\,\,\,1< q\leq p\leq\infty,
\]
by the composition rule $\varphi^{\ast}(f)=f\circ\varphi$, if the composition $\varphi^{\ast}(f)\in L^1_q(\Omega)$
is defined quasi-everywhere in $\Omega$ and there exists a constant $K_{p,q}(\varphi;\Omega)<\infty$ such that
\[
\|\varphi^{\ast}(f)\mid L^1_q(\Omega,w)\|\leq K_{p,q}(\varphi;\Omega)\|f\mid L^1_p(\widetilde{\Omega})\|
\]
for any function $f\in L^1_p(\widetilde{\Omega})$.

The mappings generate bounded composition operators on weighted Sobolev spaces were considered in \cite{UV08} in terms of the inverse distortion function. In the present work we give the characterization of the weighed composition operators in terms of the weighted $p$-distortion functions.

Recall the notion of a weighted variational $(p,w)$-capacity \cite{HKM}. Let $\Omega\subset \mathbb R^n$ be a domain, then a condenser in $\Omega$ is the pair $(F_0,F_1)$ of connected closed relatively to $\Omega$ sets $F_0,F_1\subset \Omega$. A continuous function $u\in L_p^1(\Omega,w)$ is called an admissible function for the condenser $(F_0,F_1)$,
if the set $F_i\cap \Omega$ is contained in some connected component of the set $\operatorname{Int}\{x\vert u(x)=i\}$,\ $i=0,1$. We call $(p,w)$-capacity of the condenser $(F_0,F_1)$ relatively to domain $\Omega$
the value
$$
{\cp}_{p,w}(F_0,F_1;\Omega)=\inf\|u\vert L_p^1(\Omega,w)\|^p,
$$
where the greatest lower bound is taken over all admissible for the condenser $(F_0,F_1)\subset\Omega$ functions. If $w=1$ we obtain the  variational $p$-capacity ${\cp}_{p}(F_0,F_1;\Omega)$.

\begin{lemma}
\label{lem:CapacityDescPP}
Let a homeomorphism $\varphi :\Omega\to \widetilde{\Omega}$,  where $\Omega,\widetilde{\Omega}$ are domains in $\mathbb R^n$,
generate a bounded composition operator
$$
\varphi^{\ast}: L^1_p(\widetilde{\Omega})\to L^1_q(\Omega,w),\,\,1<q\leq p<\infty.
$$
Then for every condenser $(F_0,F_1)\subset \Omega$ the inequality
$$
\cp_{q,w}^{1/q}(\varphi^{-1}(F_0),\varphi^{-1}(F_1);\Omega) \leq K_{p,q}(\varphi;\Omega)\cp_{p}^{1/p}(F_0,F_1;\widetilde{\Omega})
$$
holds.
\end{lemma}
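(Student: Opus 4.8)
The plan is to pull back admissible functions for the condenser in $\widetilde{\Omega}$ through the composition operator and to use them as test functions for the $(q,w)$-capacity of the image condenser in $\Omega$. Since the right-hand side involves $\cp_p(F_0,F_1;\widetilde{\Omega})$ and the left-hand side involves $\varphi^{-1}(F_0),\varphi^{-1}(F_1)\subset\Omega$, the condenser $(F_0,F_1)$ is understood to lie in $\widetilde{\Omega}$. First I would fix an arbitrary continuous admissible function $u\in L^1_p(\widetilde{\Omega})$ for $(F_0,F_1)$ and set $v=\varphi^{\ast}(u)=u\circ\varphi$. Because $\varphi$ is a homeomorphism and the composition operator is bounded, $v$ is a continuous function belonging to $L^1_q(\Omega,w)$, with $\|v\mid L^1_q(\Omega,w)\|\leq K_{p,q}(\varphi;\Omega)\,\|u\mid L^1_p(\widetilde{\Omega})\|$.

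The key step is to verify that $v$ is admissible for the condenser $(\varphi^{-1}(F_0),\varphi^{-1}(F_1))$ in $\Omega$. Since $\varphi$ is a homeomorphism, $\varphi^{-1}(F_i)$ are connected and closed relatively to $\Omega$, so $(\varphi^{-1}(F_0),\varphi^{-1}(F_1))$ is indeed a condenser in $\Omega$. For the level-set condition I would use that preimages under a homeomorphism commute with the interior operation: for each $i=0,1$ one has $\{x:v(x)=i\}=\varphi^{-1}(\{y:u(y)=i\})$ and hence $\operatorname{Int}\{x:v(x)=i\}=\varphi^{-1}(\operatorname{Int}\{y:u(y)=i\})$. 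Because $u$ is admissible, $F_i\cap\widetilde{\Omega}$ lies in some connected component $G_i$ of $\operatorname{Int}\{y:u(y)=i\}$; applying the homeomorphism $\varphi^{-1}$, which carries connected components to connected components, shows that $\varphi^{-1}(F_i)\cap\Omega$ lies in the connected component $\varphi^{-1}(G_i)$ of $\operatorname{Int}\{x:v(x)=i\}$. Thus $v$ satisfies all the requirements of an admissible function for the image condenser.

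With admissibility established, the definition of the $(q,w)$-capacity gives $\cp_{q,w}(\varphi^{-1}(F_0),\varphi^{-1}(F_1);\Omega)\leq\|v\mid L^1_q(\Omega,w)\|^q$, and combining with the norm bound above yields $\cp_{q,w}^{1/q}(\varphi^{-1}(F_0),\varphi^{-1}(F_1);\Omega)\leq K_{p,q}(\varphi;\Omega)\,\|u\mid L^1_p(\widetilde{\Omega})\|$. Taking the infimum over all admissible $u$ on the right-hand side produces exactly $K_{p,q}(\varphi;\Omega)\cp_p^{1/p}(F_0,F_1;\widetilde{\Omega})$, which is the claimed inequality. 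The hard part will be the rigorous verification of the topological admissibility condition in the second step: in particular, checking that the continuous representative delivered by the composition operator coincides with the pointwise composition $u\circ\varphi$, so that the level sets behave as expected, and that the interior and connected-component structure is genuinely preserved under $\varphi^{-1}$. The remaining estimates are immediate from the definitions of the capacity and the operator norm.
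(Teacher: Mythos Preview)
Your proposal is correct and follows essentially the same argument as the paper: take an admissible function $u$ for $(F_0,F_1)$ in $\widetilde{\Omega}$, observe that $u\circ\varphi$ is admissible for $(\varphi^{-1}(F_0),\varphi^{-1}(F_1))$ in $\Omega$, apply the operator bound, and pass to the infimum. You supply more detail than the paper does on why the composition is admissible (the paper simply asserts it), and you also correctly note that the condenser $(F_0,F_1)$ must lie in $\widetilde{\Omega}$ rather than $\Omega$, which is a typo in the statement.
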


\begin{proof}
Let $u$ be an admissible function for the condenser $(F_0,F_1)\subset\widetilde{\Omega}$,
then $u\circ\varphi$ be an admissible function for the condenser
$(\varphi^{-1}(F_0),\varphi^{-1}(F_1))\subset\Omega$.
Since $\varphi$
generates a bounded composition operator
$$
\varphi^{\ast}: L^1_p(\widetilde{\Omega})\to L^1_q(\Omega,w)
$$
then, by the definition of the capacity, we have
$$
{\cp}_{q,w}^{1/q}(\varphi^{-1}(F_0),\varphi^{-1}(F_1);\Omega)\leq\|\varphi^{\ast}(u)\vert L_q^1(\Omega,w)\|\leq K_{p,q}(\varphi;\Omega)\|u\,|\,L_p^1(\widetilde{\Omega})\|.
$$
Since $u$ is an arbitrary admissible function, we obtain
$$
\cp_{q,w}^{1/q}(\varphi^{-1}(F_0),\varphi^{-1}(F_1);\Omega)
\leq K_{p,q}(\varphi;\Omega)\cp_{p}^{1/p}(F_0,F_1;\widetilde{\Omega}).
$$
\end{proof}

By Lemma~\ref{lem:CapacityDescPP} it follows that an preimage of a set of $p$-capacity zero has $(q,w)$-capacity zero. Note, that if $\cp_{q,w}(E;\Omega)=0$, $E\ne\emptyset$, then the Hausdorff dimension of $E$ does not exceed $n-1$ \cite{K94}.

Now we give characterizations of composition operators on weighted Sobolev spaces in terms of $w$-weighted $(p,q)$-quasiconformal mappings.

\begin{theorem}
\label{CompThm} Let $\Omega$ and $\widetilde{\Omega}$ be domains in the Euclidean space $\mathbb R^n$. Then  a homeomorphism  $\varphi:\Omega\to{\widetilde{\Omega}}$ generates, by the composition rule $\varphi^{*}(f)=f \circ \varphi$, a bounded composition operator
\[
\varphi^{\ast}:L^{1}_{p}(\widetilde{\Omega})\to L^{1}_{p}(\Omega,w),\,\,\,1< p < \infty,
\]
if and only if $\varphi$ is a $w$-weighted $p$-quasiconformal mapping.
\end{theorem}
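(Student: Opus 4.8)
The plan is to establish the two implications separately. Sufficiency is a direct computation modelled on the sufficiency part of Theorem~\ref{loc-unif}; necessity rests on extracting a pointwise distortion inequality from the operator norm by a localization and differentiation argument.

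For sufficiency, assume $\varphi$ is $w$-weighted $p$-quasiconformal. I would first take a smooth $f\in L^{1}_{p}(\widetilde{\Omega})$; since $\varphi\in W^{1}_{1,\loc}(\Omega;\widetilde{\Omega})$ the chain rule gives $\nabla(f\circ\varphi)(x)=(D\varphi(x))^{T}\nabla f(\varphi(x))$, hence $|\nabla(f\circ\varphi)(x)|\le|D\varphi(x)|\,|\nabla f(\varphi(x))|$ almost everywhere. On the set $Z=\{x:J(x,\varphi)=0\}$ finite distortion forces $D\varphi=0$ a.e., so $Z$ contributes nothing; off $Z$ I write $|D\varphi(x)|^{p}w(x)^{p}=\frac{|D\varphi(x)|^{p}w(x)^{p}}{|J(x,\varphi)|}\,|J(x,\varphi)|\le K_{p,p}(\varphi;\Omega)^{p}|J(x,\varphi)|$. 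Combining these with the change of variables formula \eqref{CVF} applied to the nonnegative function $|\nabla f|^{p}$ (only the inequality $\int_{\Omega}|\nabla f(\varphi(x))|^{p}|J(x,\varphi)|\,dx\le\int_{\widetilde{\Omega}}|\nabla f|^{p}\,dy$ is needed, so the Luzin $N$-property is not required here) yields $\|\varphi^{*}(f)\mid L^{1}_{p}(\Omega,w)\|\le K_{p,p}(\varphi;\Omega)\,\|f\mid L^{1}_{p}(\widetilde{\Omega})\|$. The estimate passes to arbitrary $f\in L^{1}_{p}(\widetilde{\Omega})$ by density of smooth functions, exactly as in Theorem~\ref{loc-unif}.

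For necessity, assume $\varphi^{*}:L^{1}_{p}(\widetilde{\Omega})\to L^{1}_{p}(\Omega,w)$ is bounded with norm $K$. Substituting the localized test functions $f_{i}(y)=(y-y_{0})_{i}\,\eta((y-y_{0})/r)$, as at the end of the proof of Theorem~\ref{inverse}, shows that the coordinate functions of $\varphi$ lie in $L^{1}_{p}(U,w)$ for every bounded open $U$ with $\overline{U}\subset\Omega$, i.e. $\varphi\in W^{1}_{p,\loc}(\Omega,w;\widetilde{\Omega})$; since $w^{p}\in A_{p}$ forces $w^{-p'}$ to be locally integrable (Remark~\ref{AP}), Hölder's inequality gives $\varphi\in W^{1}_{1,\loc}(\Omega;\widetilde{\Omega})$, so $D\varphi$ and $J(x,\varphi)$ are defined a.e. The operator bound reads $\int_{\Omega}|(D\varphi(x))^{T}\nabla u(\varphi(x))|^{p}w(x)^{p}\,dx\le K^{p}\int_{\widetilde{\Omega}}|\nabla u|^{p}\,dy$ for all $u\in L^{1}_{p}(\widetilde{\Omega})$. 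Fixing a vector $\xi$ and a point $y_{0}=\varphi(x_{0})$, I would choose $u$ to equal the linear function $\langle\xi,y\rangle$ on $B(y_{0},\rho)$ and to be cut off in the annulus $B(y_{0},2\rho)\setminus B(y_{0},\rho)$; then $\nabla u(\varphi(x))=\xi$ on $\varphi^{-1}(B(y_{0},\rho))$ and the right-hand side is controlled by $K^{p}|\xi|^{p}$ times the volume of a ball of radius $2\rho$. Dividing by $|\varphi^{-1}(B(y_{0},\rho))|$ and letting $\rho\to0$ at a common Lebesgue point of $|D\varphi|^{p}w^{p}$ and $|J(\cdot,\varphi)|$ should give $|(D\varphi(x_{0}))^{T}\xi|^{p}w(x_{0})^{p}\le K^{p}|\xi|^{p}|J(x_{0},\varphi)|$; taking the supremum over $|\xi|=1$ produces $|D\varphi(x)|^{p}w(x)^{p}\le K^{p}|J(x,\varphi)|$ a.e., that is $K_{p,p}(\varphi;\Omega)\le K$. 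On $Z$ this inequality forces $D\varphi=0$ a.e. (as $w>0$ a.e. because $w^{p}\in A_{p}$), so $\varphi$ automatically has finite distortion.

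The delicate part is the differentiation step in the last paragraph. The preimages $\varphi^{-1}(B(y_{0},\rho))$ need not be balls, so the Lebesgue-type limit must be taken over a possibly distorted shrinking family, and the normalization $|B(y_{0},\rho)|/|\varphi^{-1}(B(y_{0},\rho))|\to|J(x_{0},\varphi)|$ has to be justified; both require that $\varphi$ enjoy the Luzin $N$-property and that the family $\{\varphi^{-1}(B(y_{0},\rho))\}$ be regular. I expect to obtain these facts not from the pointwise analysis itself but from the capacitary control of Lemma~\ref{lem:CapacityDescPP} together with the theory of monotone countably additive set functions associated with the operator and their volume derivatives; this is the technical core of the argument, whereas the sufficiency direction is elementary.
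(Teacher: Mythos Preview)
Your sufficiency argument is essentially the paper's: chain rule, finite distortion kills the contribution of $Z$, then the change of variables formula~\eqref{CVF} gives the bound for smooth $f$, and one extends by density. (The paper phrases the extension via $p$-quasi-everywhere convergence and the capacity lemma rather than via the reflexivity argument of Theorem~\ref{loc-unif}, but either route works.)

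The necessity direction, however, has a real gap, and your diagnosis of where it lies is correct while your proposed remedy is not. Differentiating over the shrinking family $\{\varphi^{-1}(B(y_{0},\rho))\}$ would require these preimages to form a regular (Vitali-type) family at $x_{0}$ and would require the ratio $|B(y_{0},\rho)|/|\varphi^{-1}(B(y_{0},\rho))|$ to converge to $|J(x_{0},\varphi)|$. Both of these are essentially equivalent to metric quasiconformality of $\varphi$ at $x_{0}$, which is close to what you are trying to prove; neither follows from capacitary control (Lemma~\ref{lem:CapacityDescPP}) or from the monotone set-function machinery, which in this paper is invoked only in the case $q<p$ (Theorem~\ref{CompTh}). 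You also do not need, and do not have, the Luzin $N$-property here.

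The paper sidesteps the whole issue by differentiating on the \emph{target} side. From the same test functions one obtains
\[
\int_{\varphi^{-1}(B(y_{0},r))}|D\varphi(x)|^{p}w(x)^{p}\,dx\le C\,K_{p,p}(\varphi;\Omega)^{p}\,|B(y_{0},r)|.
\]
This already gives $\varphi\in W^{1}_{p,\loc}(\Omega,w)$. Next one proves finite distortion directly: the change of variables formula~\eqref{CVF} implies $|\varphi(Z\setminus S)|=0$, and covering $\varphi(Z\setminus S)$ by balls of total volume $<\varepsilon$ together with the displayed inequality forces $\int_{Z}|D\varphi|^{p}w^{p}\,dx=0$. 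Once finite distortion is available, the left-hand side above can be rewritten, via~\eqref{CVF}, as
\[
\int_{B(y_{0},r)\setminus\varphi(S)}\frac{|D\varphi(\varphi^{-1}(y))|^{p}w(\varphi^{-1}(y))^{p}}{|J(\varphi^{-1}(y),\varphi)|}\,dy,
\]
and now standard Lebesgue differentiation over genuine balls $B(y_{0},r)\subset\widetilde{\Omega}$ yields the pointwise bound $|D\varphi|^{p}w^{p}\le C\,K_{p,p}(\varphi;\Omega)^{p}|J(\cdot,\varphi)|$ a.e. The key move you are missing is this transfer to the image domain before differentiating; it removes the need for any regularity of the preimage family and avoids the circularity you anticipated.
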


\begin{proof} \textit{Necessity.} Fix a cut function $\eta\in C_{0}^{\infty}(\mathbb R^n)$, which is equal to one on the ball $B(0,1)$ and is equal to zero outside of the ball $B(0,2)$. Consider the test functions
$$
f_j(y)=(y_j-y_{j0})\eta\left(\frac{y-y_0}{r}\right),\,\,\, j=1,...,n,
$$
where $y_j$ denotes the $j$-th coordinate. Then
\begin{multline}
|\nabla( f_j(y))|=\left|(\nabla(y_j-y_{j0}))\eta\left(\frac{y-y_0}{r}\right)+(y_j-y_{j0})\nabla\biggl(\eta\left(\frac{y-y_0}{r}\right)\biggr)\right|\\
=
\eta\left(\frac{y-y_0}{r}\right)+\frac{y_j-y_{j0}}{r}(\nabla\eta)\left(\frac{y-y_0}{r}\right)\leq 1+(\nabla\eta)\left(\frac{y-y_0}{r}\right)\leq C.
\nonumber
\end{multline}
Substituting in the inequality
$$
\|\varphi^{\ast}(f) \mid L^1_p(\Omega,w)\|\leq K_{p,p}(\varphi;\Omega) \|f\mid L^1_p(\widetilde{\Omega})\|
$$
the test functions
$$
f_j(y)=(y_j-y_{j0})\eta\left(\frac{y-y_0}{r}\right),\,\,\, j=1,...,n,
$$
we obtain that
\begin{equation}\label{EqFunPP}
\biggl(\int\limits_{\varphi^{-1}(B(y_0,r))}|D\varphi(x)|^pw(x)^p\,dx\biggr)^{\frac1/p}\leq CK_{p,p}(\varphi;\Omega)(r^n)^{1/p},
\end{equation}
where $C$ is a constant which depends on $n$ and $p$ only. Hence the homeomorphism $\varphi$ belongs to the weighed Sobolev space $W_{p,{\loc}}^1(\Omega,w)$ and as a consequence to $W^1_{1\loc}(\Omega)$.

Now we prove that $\varphi:\Omega\to\widetilde{\Omega}$ is a mapping of finite distortion. Let $Z=\{x\in \Omega : J(x,\varphi)=0\}$. We prove
$$
\int\limits_{Z}|D\varphi(x)|^pw(x)^p\,dx=0.
$$
Rewrite this integral as a sum of two integrals:
$$
\int\limits_{Z}|D\varphi(x)|^pw(x)^p\,dx
=\int\limits_{Z\setminus S}|D\varphi(x)|^pw(x)^p\,dx+
\int\limits_{Z \cap S}|D\varphi(x)|^pw(x)^p\,dx,
$$
where $S$ is the set from the change of variables formula (\ref{CVF}) on which the homeomorphism $\varphi$ has no the Luzin $N$-property.

Because $|S|=0$ then
$$
\int\limits_{Z\cap S}|D\varphi(x)|^pw(x)^p\,dx=0.
$$

Now we show that
$$
\int\limits_{Z\setminus S}|D\varphi(x)|^pw(x)^p\,dx=0.
$$
By the change of variable formula we have $|\varphi(Z\setminus S)|=0$. Fix an arbitrary number $\varepsilon>0$. Then there exists a family of balls $\{B(y_i,r_i)\}$ which covering the set $\varphi(Z\setminus S)$ such that the multiplicity of the covering $B(y_i,2r_i)$ is finite and $\sum\limits_i|B_i|<\varepsilon$.
Then by inequality (\ref{EqFunPP}) we obtain
$$
\int\limits_{Z\setminus S}|D\varphi(x)|^pw(x)^p\,dx\leq
\sum\limits_{i=1}^{\infty}\int\limits_{\varphi^{-1}(B(y_i,r_i))}|D\varphi(x)|^pw(x)^p\,dx
\leq C^p K^p_{p,p}(\varphi;\Omega) \sum\limits_{i=1}^{\infty}|B_i|.
$$
Since $\varepsilon$ is an arbitrary number then $\int\limits_{Z\setminus S}|D\varphi|^pw(x)^p\,dx=0$. Hence, $D\varphi=0$ a.~e. on $Z\setminus S$ and as a consequence $D\varphi=0$ a.e. on $Z$ and $\varphi$ is the mapping of finite distortion.

Now we apply to the left side of the inequality~(\ref{EqFunPP}) the change of variable formula (\ref{CVF}). Denote by $B:=B(y_0,r)$:
\begin{multline}
\biggl(\int\limits_{\varphi^{-1}(B)}|D\varphi(x)|^pw(x)^p\,dx\biggr)^{\frac{1}{p}}
=
\biggl(\int\limits_{\varphi^{-1}(B)\setminus S}|D\varphi(x)|^pw(x)^p\,dx\biggr)^{\frac{1}{p}}\\
=\biggl(\int\limits_{\varphi^{-1}(B)\setminus (S\cup Z)}
|D\varphi(x)|^pw(x)^p\,dx\biggr)^{\frac{1}{p}}
=
\biggl(\int\limits_{\varphi^{-1}(B)\setminus (S\cup Z)}
\frac{|D\varphi(x)|^pw(x)^p}{|J(x,\varphi)|}|J(x,\varphi)|\,dx\biggr)^{\frac{1}{p}}\\
=\biggl(\int\limits_{B(y_0,r)\setminus \varphi(S)}
\frac{|D\varphi(\varphi^{-1}(y))|^pw(\varphi^{-1}(y))^p}{|J(\varphi^{-1}(y),\varphi)|}\,dy\biggr)^{\frac{1}{p}}
\leq CK_{p,p}(\varphi;\Omega)|B|^{\frac{1}{p}}.
\nonumber
\end{multline}
Hence
$$
\biggl(\frac{1}{|B|}\int\limits_{B(y_0,r)\setminus \varphi(S)}
\frac{|D\varphi(\varphi^{-1}(y))|^pw(\varphi^{-1}(y))^p}{|J(\varphi^{-1}(y),\varphi)|}\,dy\biggr)^{\frac{1}{p}}
\leq CK_{p,p}(\varphi;\Omega).
$$

By using the Lebesgue theorem on differentiability of the integral by measure (see, for example, \cite{RR55,VU04}) we obtain
$$
\biggl(
\frac{|D\varphi(\varphi^{-1}(y))|^pw(\varphi^{-1}(y))^p}{|J(\varphi^{-1}(y),\varphi)|}\biggr)^{\frac{1}{p}}
\leq CK_{p,p}(\varphi;\Omega)\quad\text{ for almost all}\,\,\, y\in\widetilde{\Omega}\setminus\varphi(S).
$$
Because on the set $\Omega\setminus S$ the homeomorphism $\varphi$ has the Luzin $N$-property, finally we have
$$
\ess\sup\limits_{x\in\Omega}\left(\frac{|D\varphi(x)|^pw(x)^p}{|J(x,\varphi)|}\right)^{\frac{1}{p}}\leq CK_{p,p}(\varphi;\Omega)<\infty.
$$

\textit{Sufficiency.}
Let $f\in L_p^1(\widetilde{\Omega})$ be a smooth function. Then the composition $f\circ\varphi$
belongs to the class $W^1_{1,\loc}(\Omega)$ and the chain rule holds \cite{Z89}. So, we have
\begin{multline}
\|\varphi^*(f)\vert L_p^1(\Omega.w)\|=\biggl(\int\limits_\Omega|\nabla (f\circ\varphi(x))|^pw(x)^p\,dx\biggr)^{\frac{1}{p}}\\
\leq
\biggl(\int\limits_\Omega(|\nabla f(\varphi(x))|^pw(x)^p|D\varphi(x)|^p\,dx\biggr)^{\frac{1}{p}}\\
=\biggl(\int\limits_{\Omega\setminus Z}|\nabla f(\varphi(x))|^pw(x)^p|J(x,\varphi)|
\frac{|D\varphi(x)|^p}{|J(x,\varphi)|}\,dx\biggr)^{\frac{1}{p}}.
\nonumber
\end{multline}
Hence
\begin{multline}
\|\varphi^*(f)\vert L_p^1(\Omega,w)\|=\biggl(\int\limits_\Omega|\nabla (f\circ\varphi(x))|^pw(x)^p\,dx\biggr)^{\frac{1}{p}}\\
\leq
\ess\sup\limits_{x\in\Omega}\left(\frac{|D\varphi(x)|^pw(x)^p}{|J(x,\varphi)|}\right)^{\frac{1}{p}}\biggl(\int\limits_{\Omega\setminus Z}|\nabla f(\varphi(x))|^p|J(x,\varphi)|\,dx\biggr)^{\frac{1}{p}}\\
=K_{p,p}(\varphi;\Omega)\biggl(\int\limits_{\Omega\setminus (Z\cup S)}|\nabla f(\varphi(x))|^p|J(x,\varphi)|\,dx\biggr)^{\frac{1}{p}}\\
=K_{p,p}(\varphi;\Omega)\biggl(\int\limits_{\widetilde{\Omega}\setminus \varphi(S)}|\nabla f(y)|^p\,dy\biggr)^{\frac{1}{p}}
\leq K_{p,p}(\varphi;\Omega)\biggl(\int\limits_{\widetilde{\Omega}}|\nabla f(y)|^p\,dy\biggr)^{\frac{1}{p}}.
\nonumber
\end{multline}

Therefore we proved the required inequality
$$
\|\varphi^{\ast}(f) \mid L^1_p(\Omega,w)\|\leq K_{p,p}(\varphi;\Omega) \|f \mid L^1_p(\widetilde{\Omega})\|
$$
for every smooth function $f\in L^1_p(\widetilde{\Omega})$.

To extend the estimate onto all functions $f\in L^1_p(\widetilde{\Omega})$, $1< p<\infty$, consider a sequence of smooth functions $f_k\in L^1_p(\widetilde{\Omega})$, $k=1,2,...$, such that $f_k\to f$ in $L^1_p(\widetilde{\Omega})$ and $f_k\to f$ $p$-quasi-everywhere in $\widetilde{\Omega}$ as $k\to\infty$. Since the preimage $\varphi^{-1}(E)$ of the set $E\subset \widetilde{\Omega}$ of a $p$-capacity zero has the $(p,w)$-capacity zero, we have $\varphi^{\ast}(f_k)\to \varphi^{\ast}(f)$
$(p,w)$-quasi-everywhere in $\Omega$ as $k\to\infty$. Hence extension by continuity of the operator $\varphi^{\ast}$ $L^1_p(\widetilde{\Omega})\cap C^{\infty}(\widetilde{\Omega})$ to $L^1_p(\widetilde{\Omega})$ coincides with the composition operator $\varphi^{\ast}(f) = f\circ\varphi$.

\end{proof}

In the case $1<q<p<\infty$ the critical role have introduced in \cite{U93} set functions associated with composition operators. Recall that a nonnegative mapping $\Phi$ defined on open subsets of $\Omega$ is a monotone countably additive set function \cite{RR55,VU04} if

\noindent
1) $\Phi(U_1)\leq \Phi(U_2)$ if $U_1\subset U_2\subset\Omega$;

\noindent
2)  for any collection $U_i \subset U \subset \Omega$, $i=1,2,...$, of mutually disjoint open sets
$$
\sum_{i=1}^{\infty}\Phi(U_i) = \Phi\left(\bigcup_{i=1}^{\infty}U_i\right).
$$

The following lemma gives properties of monotone countably additive set functions defined on open subsets of $\Omega\subset \mathbb R^n$ \cite{RR55,VU04}.

\begin{lemma}
\label{lem:AddFun}
Let $\Phi$ be a monotone countably additive set function defined on open subsets of the domain $\Omega\subset \mathbb R^n$. Then

\noindent
(a) at almost all points $x\in \Omega$ there exists a finite derivative
$$
\lim\limits_{r\to 0}\frac{\Phi(B(x,r))}{|B(x,r)|}=\Phi'(x);
$$

\noindent
(b) $\Phi'(x)$ is a measurable function;

\noindent
(c) for every open set $U\subset \Omega$ the inequality
$$
\int\limits_U\Phi'(x)~dx\leq \Phi(U).
$$
\end{lemma}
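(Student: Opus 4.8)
The plan is to recognize the set function $\Phi$ as the restriction to open sets of a genuine Borel measure on $\Omega$ and then to apply the classical differentiation theory of measures. Since $\Phi$ is real-valued and nonnegative, it is finite on every open set, and in particular on every ball $B(x,r)$ with $\overline{B(x,r)}\subset\Omega$. I would first extend $\Phi$ to arbitrary subsets $E\subset\Omega$ by the outer-measure formula
$$
\mu(E)=\inf\{\Phi(U):E\subset U\subset\Omega,\ U\ \text{open}\}.
$$
Using monotonicity (property 1) together with countable additivity on disjoint open families (property 2), one verifies that $\mu$ is a Borel-regular outer measure, finite on sets with compact closure in $\Omega$, whose restriction to the Borel $\sigma$-algebra is a Radon measure agreeing with $\Phi$ on open sets; in particular $\mu(B(x,r))=\Phi(B(x,r))$ for all admissible balls. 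This is the construction underlying \cite{RR55,VU04}.

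Granting the extension, parts (a) and (b) are immediate from the Lebesgue--Besicovitch differentiation theorem for Radon measures on $\mathbb R^n$: for almost every $x\in\Omega$ the symmetric derivative
$$
D\mu(x)=\lim_{r\to 0}\frac{\mu(B(x,r))}{|B(x,r)|}
$$
exists, is finite, and is a measurable (indeed locally integrable) function of $x$. Since $\mu(B(x,r))=\Phi(B(x,r))$, this derivative is exactly $\Phi'(x)$, which establishes both the almost-everywhere existence asserted in (a) and the measurability asserted in (b).

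For (c) I would invoke the Lebesgue decomposition $\mu=\mu_a+\mu_s$ of $\mu$ with respect to Lebesgue measure, where $\mu_a\ll|\cdot|$ and $\mu_s\perp|\cdot|$. The differentiation theorem identifies $\Phi'=D\mu$ almost everywhere with the Radon--Nikodym density $d\mu_a/dx$ of the absolutely continuous part. Hence, for every open set $U\subset\Omega$,
$$
\int\limits_U\Phi'(x)\,dx=\mu_a(U)\leq\mu_a(U)+\mu_s(U)=\mu(U)=\Phi(U),
$$
which is the asserted inequality.

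The only genuinely delicate point is the first step: confirming that the infimum formula produces a countably additive Borel measure and not merely a subadditive outer measure. The subtlety is that finite subadditivity of $\Phi$ on overlapping open sets is not among the hypotheses and must be derived from monotonicity and additivity on disjoint families, by approximating the overlap from inside by disjoint open pieces. An alternative that avoids the extension altogether, and is the route taken in \cite{RR55,VU04}, is to prove (a) and (c) directly by a Vitali covering argument: the set where the upper derivative exceeds a level $t$ is covered by a disjoint family of balls with $\Phi(B_i)>t|B_i|$, and summing via property 2 gives $\sum_i|B_i|\leq t^{-1}\Phi(U')$ for a bounded open $U'$ with closure in $\Omega$ containing the balls, which both bounds the exceptional set in (a) and yields the level-set estimate feeding (c). Once the derivative is known to exist and be locally integrable, the inequality in (c) follows by the same covering scheme applied to the sublevel decomposition of $\Phi'$.
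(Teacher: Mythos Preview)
The paper does not actually prove this lemma; it is stated with citations to \cite{RR55,VU04} and used as a black box. Your proposal supplies a correct and standard argument: extend $\Phi$ to a Radon measure, invoke the Lebesgue--Besicovitch differentiation theorem for (a) and (b), and use the Lebesgue decomposition for (c). You also correctly flag the only nontrivial step, namely that countable additivity on \emph{disjoint} open families together with monotonicity must be upgraded to genuine countable subadditivity before the outer-measure extension yields a Borel measure; your alternative Vitali-covering route, which works directly with $\Phi$ on balls and avoids the extension, is precisely the approach taken in the cited sources \cite{RR55,VU04}. Either path is acceptable here, and nothing in your outline is mathematically wrong.
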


Now we formulate the fundamental property of the composition operators of weighted Sobolev spaces \cite{UV08}.

\begin{theorem}\label{set-func}
Let the mapping $\varphi : \Omega \to \widetilde{\Omega}$ generate a bounded composition operator
\[
\varphi^{*}:L^{1}_{p}(\widetilde{\Omega})\to L^{1}_{q}(\Omega,w),\,\,\, 1<q<p<\infty.
\]
Then
\[
\Phi(\widetilde{A})=\sup\limits _{f\in L^1_p(\widetilde{\Omega}\cap C_0(\widetilde{A})} \left(\frac{\|\varphi^{*} \mid L^{1}_{q}(\Omega,w)\|}{\|f \mid L^1_p(\widetilde{A})\|}\right)^{\frac{pq}{p-q}},
\]
be a bounded monotone countably additive function defined by on open bounded subsets $\widetilde{A} \subset \widetilde{\Omega}$.
\end{theorem}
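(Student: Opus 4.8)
The plan is to verify the three defining properties of a monotone countably additive set function for $\Phi$. Throughout I write $\varkappa=pq/(p-q)$ for the occurring exponent and record that $\Phi(\widetilde A)^{1/\varkappa}$ is precisely the operator norm of $\varphi^{\ast}$ acting on those functions of $L^1_p(\widetilde\Omega)$ supported in $\widetilde A$. Boundedness and monotonicity are then immediate. Since the global operator is bounded we have $\|\varphi^{\ast}(f)\mid L^1_q(\Omega,w)\|\le K_{p,q}(\varphi;\Omega)\|f\mid L^1_p(\widetilde A)\|$ for every admissible $f$, whence $\Phi(\widetilde A)\le K_{p,q}(\varphi;\Omega)^{\varkappa}<\infty$; and if $\widetilde A_1\subset\widetilde A_2$ then extension by zero embeds $C_0(\widetilde A_1)$ into $C_0(\widetilde A_2)$ without changing either seminorm or the composition, so the supremum over the larger test class can only increase, giving $\Phi(\widetilde A_1)\le\Phi(\widetilde A_2)$. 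The substance of the theorem is countable additivity, which I would obtain by first establishing finite additivity and then passing to the limit via monotonicity.

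For a disjoint family $\{\widetilde A_i\}$ the two orthogonality identities drive everything: because the sets $\widetilde A_i$, and hence their preimages $\varphi^{-1}(\widetilde A_i)$, are pairwise disjoint, the gradients of functions supported in different pieces have disjoint supports, so for $f=\sum_i f_i$ with $f_i\in C_0(\widetilde A_i)$ one has $\|f\mid L^1_p(\cup_i\widetilde A_i)\|^p=\sum_i\|f_i\mid L^1_p(\widetilde A_i)\|^p$ and $\|\varphi^{\ast}(f)\mid L^1_q(\Omega,w)\|^q=\sum_i\|\varphi^{\ast}(f_i)\mid L^1_q(\Omega,w)\|^q$. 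For the subadditivity bound I would take arbitrary $f\in C_0(\cup_i\widetilde A_i)\cap L^1_p$ with nonzero seminorm, note that its compact support meets only finitely many of the disjoint open sets $\widetilde A_i$, and check that the pieces $f_i=f\cdot\chi_{\widetilde A_i}$ genuinely lie in $C_0(\widetilde A_i)$: a boundary limit point of $\{f\ne 0\}\cap\widetilde A_i$ would belong to $\operatorname{supp}f\subset\cup_j\widetilde A_j$ and hence to some disjoint $\widetilde A_j$, which is impossible, so no mass escapes to $\partial\widetilde A_i$. Applying the defining estimate $\|\varphi^{\ast}(f_i)\|\le\Phi(\widetilde A_i)^{1/\varkappa}\|f_i\|$ termwise and then H\"older's inequality with conjugate exponents $s=p/q$ and $s'=p/(p-q)$, the exponents collapse, since $(q/\varkappa)s'=1$ and $((p-q)/p)(\varkappa/q)=1$, so that $(\|\varphi^{\ast}(f)\|/\|f\|)^{\varkappa}\le\sum_i\Phi(\widetilde A_i)$; taking the supremum over $f$ gives $\Phi(\cup_i\widetilde A_i)\le\sum_i\Phi(\widetilde A_i)$.

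For the reverse inequality I would fix $\varepsilon>0$, choose for each $i$ a normalized $f_i\in C_0(\widetilde A_i)$ with $\|f_i\mid L^1_p(\widetilde A_i)\|=1$ and $\|\varphi^{\ast}(f_i)\|^{\varkappa}>\Phi(\widetilde A_i)-\varepsilon$, and test $\Phi$ against the finite combinations $f=\sum_{i=1}^N c_i f_i$. By the two orthogonality identities the ratio to be maximized becomes $(\sum_i|c_i|^q a_i)^{1/q}/(\sum_i|c_i|^p)^{1/p}$ with $a_i=\|\varphi^{\ast}(f_i)\|^q$, and maximizing over the coefficients is exactly the duality between the $\ell^{s}$- and $\ell^{s'}$-norms; this yields $\sup_c(\|\varphi^{\ast}(f)\|/\|f\|)^{\varkappa}=\sum_{i=1}^N\|\varphi^{\ast}(f_i)\|^{\varkappa}$, whence $\Phi(\cup_i\widetilde A_i)\ge\sum_{i=1}^N(\Phi(\widetilde A_i)-\varepsilon)$. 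Letting $N\to\infty$ and $\varepsilon\to 0$ gives superadditivity. Combining the two bounds yields finite additivity, and the countable case follows at once: subadditivity already covers it, since each test function meets only finitely many sets, while superadditivity for the infinite union follows from the finite case together with monotonicity applied to $\cup_{i=1}^{N}\widetilde A_i\subset\cup_{i=1}^{\infty}\widetilde A_i$.

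I expect the principal obstacle to be the exponent bookkeeping that pins down $\varkappa=pq/(p-q)$: the subadditivity half succeeds only because the H\"older pair is chosen so that both $(q/\varkappa)s'$ and $((p-q)/p)(\varkappa/q)$ reduce to $1$, and the superadditivity half is precisely the equality case of that same H\"older inequality, realized through the optimal coefficients $c_i$. A secondary but genuine technical point is the admissibility of the decomposition $f=\sum_i f_i$ within $C_0$, which must be justified by the disjointness-of-open-sets argument indicated above rather than assumed.
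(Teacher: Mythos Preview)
The paper does not actually prove this theorem; it is stated with a citation to \cite{UV08} as a known result and then used as a black box in the proof of Theorem~\ref{CompTh}. Your argument is correct and is essentially the standard one that appears in the cited literature (and in the non-weighted predecessors such as \cite{U93,VU04}): the exponent $\varkappa=pq/(p-q)$ is precisely the one that makes the H\"older step in the subadditivity direction collapse to $\sum_i\Phi(\widetilde A_i)$, and the superadditivity direction is the equality case of that same H\"older inequality realized by the optimal coefficients. Your handling of the decomposition $f=\sum_i f_i$ into pieces in $C_0(\widetilde A_i)$ via the disjoint-open-sets argument is also the right way to close that gap.
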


\begin{theorem}
\label{CompTh} Let $\Omega$ and $\widetilde{\Omega}$ be domains in the Euclidean space $\mathbb R^n$. Then  a homeomorphism  $\varphi:\Omega\to{\widetilde{\Omega}}$ generates, by the composition rule $\varphi^{*}(f)=f \circ \varphi$, a bounded composition operator
\[
\varphi^{\ast}:L^{1}_{p}(\widetilde{\Omega})\to L^{1}_{q}(\Omega,w),\,\,\,1<q< p < \infty,
\]
if and only if $\varphi$ is a $w$-weighted $(p,q)$-quasiconformal mapping.
\end{theorem}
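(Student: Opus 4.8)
The plan is to follow the template of Theorem~\ref{CompThm}, replacing the essential supremum of the distortion by the integrated quantity $K_{p,q}$ and using the monotone countably additive set function $\Phi$ of Theorem~\ref{set-func} as the main new tool. \textit{Necessity.} First I would establish that $\varphi\in W^1_{1,\loc}(\Omega;\widetilde\Omega)$ and has finite distortion. Substituting the test functions $f_j(y)=(y_j-y_{j0})\eta\left((y-y_0)/r\right)$ into the operator inequality and estimating $\|f_j\mid L^1_p(B(y_0,r))\|$ through $\Phi$ yields the local bound
\[
\biggl(\int\limits_{\varphi^{-1}(B(y_0,r))}|D\varphi(x)|^qw(x)^q\,dx\biggr)^{\frac1q}\leq C\,\Phi(B(y_0,r))^{\frac{p-q}{pq}}|B(y_0,r)|^{\frac1p}.
\]
Since $\Phi$ is bounded, this gives $\varphi\in W^1_{q,\loc}(\Omega,w)$, which embeds into $W^1_{1,\loc}(\Omega)$ because $w^q\in A_q$. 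For finite distortion I would repeat the covering argument from Theorem~\ref{CompThm}, covering $\varphi(Z\setminus S)$ by balls $B(y_i,r_i)$ of finite multiplicity with $\sum_i|B_i|<\varepsilon$; the new ingredient is that $\sum_i\Phi(B_i)^{(p-q)/p}|B_i|^{q/p}$ is controlled by H\"older's inequality for sums together with the quasi-additivity of $\Phi$, hence dominated by $\bigl(\sum_i\Phi(B_i)\bigr)^{(p-q)/p}\varepsilon^{q/p}\to 0$.

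The heart of the argument is the passage to a pointwise distortion estimate. Applying the change of variables formula~\eqref{CVF} to the left-hand side of the local bound, dividing by $|B(y_0,r)|$, and letting $r\to0$, the Lebesgue differentiation theorem applies on the left and Lemma~\ref{lem:AddFun}(a) applies to $\Phi$ on the right, giving
\[
\frac{|D\varphi(\varphi^{-1}(y))|^qw(\varphi^{-1}(y))^q}{|J(\varphi^{-1}(y),\varphi)|}\leq C^q\,\Phi'(y)^{\frac{p-q}{p}}\quad\text{for almost all }y\in\widetilde\Omega.
\]
Then I would use the elementary identity
\[
\biggl(\frac{|D\varphi|^pw^p}{|J|}\biggr)^{\frac{q}{p-q}}=\biggl(\frac{|D\varphi|^qw^q}{|J|}\biggr)^{\frac{p}{p-q}}|J(x,\varphi)|,
\]
raise the pointwise estimate to the power $p/(p-q)$, and integrate over $\Omega\setminus Z$; the change of variables converts $\int\Phi'(\varphi(x))|J(x,\varphi)|\,dx$ into $\int_{\widetilde\Omega}\Phi'(y)\,dy$, which is finite by Lemma~\ref{lem:AddFun}(c) and the boundedness of $\Phi$. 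This yields $K_{p,q}(\varphi;\Omega)<\infty$.

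\textit{Sufficiency.} For smooth $f\in L^1_p(\widetilde\Omega)$ the composition $f\circ\varphi$ lies in $W^1_{1,\loc}(\Omega)$ and the chain rule gives $|\nabla(f\circ\varphi)(x)|\leq|\nabla f(\varphi(x))|\,|D\varphi(x)|$. Writing the integrand as the product of $|\nabla f(\varphi(x))|^q|J(x,\varphi)|^{q/p}$ and $|D\varphi(x)|^qw(x)^q|J(x,\varphi)|^{-q/p}$ and applying H\"older's inequality with exponents $p/q$ and $p/(p-q)$, the first factor becomes $\bigl(\int_{\Omega\setminus Z}|\nabla f(\varphi(x))|^p|J(x,\varphi)|\,dx\bigr)^{q/p}\leq\|f\mid L^1_p(\widetilde\Omega)\|^q$ after change of variables, while the second factor equals $K_{p,q}(\varphi;\Omega)^q$ by the same identity. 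This proves $\|\varphi^{\ast}(f)\mid L^1_q(\Omega,w)\|\leq K_{p,q}(\varphi;\Omega)\|f\mid L^1_p(\widetilde\Omega)\|$ for smooth $f$, and the extension to arbitrary $f\in L^1_p(\widetilde\Omega)$ proceeds as in Theorem~\ref{CompThm}, approximating $f$ by smooth functions converging in norm and $p$-quasi-everywhere and using Lemma~\ref{lem:CapacityDescPP} to carry the convergence through $\varphi^{\ast}$.

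The main obstacle I anticipate lies in the necessity direction: one must verify that the measure obtained on the left after the change of variables is absolutely continuous with the expected density, so that Lebesgue differentiation yields exactly the distortion quotient, and one must rely on Theorem~\ref{set-func} and Lemma~\ref{lem:AddFun} to supply simultaneously the quasi-additivity of $\Phi$ used for finite distortion and the a.e. differentiability together with the bound $\int_{\widetilde\Omega}\Phi'(y)\,dy\leq\Phi(\widetilde\Omega)$ used in the final integration.
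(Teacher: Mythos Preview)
Your proposal is correct and follows essentially the same approach as the paper's own proof: the necessity uses the set function $\Phi$ of Theorem~\ref{set-func} together with the test functions $f_j$, the covering argument for finite distortion, the change of variables plus Lebesgue differentiation and Lemma~\ref{lem:AddFun} to obtain the pointwise distortion bound, and then integrates; the sufficiency is the H\"older/change-of-variables computation followed by density. The only cosmetic slip is that the test functions $f_j$ are supported in $B(y_0,2r)$, so in the local bound one should write $\Phi(B(y_0,2r))$ rather than $\Phi(B(y_0,r))$, exactly as in the paper; this does not affect the argument.
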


\begin{proof}
{\it Necessity.} By Theorem \ref{set-func} the inequality
\begin{equation}
\label{eqphi}
\|\varphi^{\ast} f|{L}_{q}^{1}(\Omega,w)\|\leq \Phi(\widetilde{A})^{\frac{p-q}{pq}}
\|f|{L}_{p}^{1}(\widetilde{\Omega})\|,\,\,\,1< q<p<\infty,
\end{equation}
holds for any function $f\in {L}_{p}^{1}(\widetilde{\Omega})\cap C_0(\widetilde{A})$.

Fix a cut function
$\eta\in C_{0}^{\infty}(\mathbb R^n)$,
which is equal to one on the ball $B(0,1)$
and is equal to zero outside of the ball $B(0,2)$.
Substituting in the inequality (\ref{eqphi})
the test functions
$$
f_j(y)=(y_j-y_{j0})\eta\left(\frac{y-y_0}{r}\right),\,\,\, j=1,...,n,
$$
we see that
\begin{equation}\label{EqFun}
\biggl(\int\limits_{\varphi^{-1}(B(y_0,r))}|D\varphi(x)|^qw(x)^q\,dx\biggr)^{1/q}\leq
C\Phi(B(y_0,2r))^{\frac{p-q}{pq}}(r^n)^{1/p}.
\end{equation}
where $C$ is a constant which depends on $n$ and $p$ only. Hence the homeomorphism $\varphi$ belongs to the weighed Sobolev space $W_{q,{\loc}}^1(\Omega,w)$ and as a consequence to $W^1_{1\loc}(\Omega)$.

Now we prove that $\varphi$ is a mapping of finite distortion.
Let $Z=\{x\in \Omega : J(x,\varphi)=0\}$. We prove
$$
\int\limits_{Z}|D\varphi(x)|^qw(x)^q\,dx=0.
$$
Rewrite this integral as sum of two integrals:
$$
\int\limits_{Z}|D\varphi(x)|^qw(x)^q\,dx
=\int\limits_{Z\setminus S}|D\varphi(x)|^qw(x)^q\,dx+
\int\limits_{Z \cap S}|D\varphi(x)|^qw(x)^q\,dx,
$$
where $S$ is the set from the change of variables formula (\ref{CVF}) on which the homeomorphism $\varphi$ has no the Luzin $N$-property.

Because $|S|=0$ then
$$
\int\limits_{Z\cap S}|D\varphi(x)|^qw(x)^q\,dx=0.
$$
Now we prove that
$$
\int\limits_{Z\setminus S}|D\varphi(x)|^qw(x)^q\,dx=0.
$$
By the change of variable formula (\ref{CVF}) we have that $|\varphi(Z\setminus S)|=0$. Fix $\varepsilon>0$. Then there exists a family of balls $\{B(y_i,r_i)\}$ generates a covering of the set $\varphi(Z\setminus S)$ such that the multiplicity of the covering $B(y_i,2r_i)$ is finite and $\sum\limits_i|B_i|<\varepsilon$.
Then by inequality (\ref{EqFun}) we obtain
\begin{multline}
\int\limits_{Z\setminus S}|D\varphi(x)|^qw(x)^q\,dx\leq
\sum\limits_{i=1}^{\infty}\int\limits_{\varphi^{-1}(B(y_i,r_i))}|D\varphi(x)|^qw(x)^q\,dx\\
\leq C\sum\limits_{i=1}^{\infty}\Phi(B(y_i,2r_i))^{\frac{p-q}{p}}(r_i^n)^{q/p}\\
\leq C\sum\limits_{i=1}^{\infty}\Phi(B(y_i,2r_i))^{\frac{p-q}{p}}
(\sum\limits_{i=1}^{\infty}r_i^n)^{q/p}.
\nonumber
\end{multline}
Since $\varepsilon$ is an arbitrary number then $\int\limits_{Z\setminus S}|D\varphi(x)|^qw(x)^q\,dx=0$. Hence $D\varphi=0$ a.~e. on $Z\setminus S$ and $\varphi$ is the mapping of finite distortion.

Now rewrite inequality~(\ref{EqFun}) to the form
$$
\biggl(
\int\limits_{\varphi^{-1}(B(y_0,r))}|D\varphi(x)|^qw(x)^q\,dx
\biggr)^{\frac{p}{p-q}}
\leq C\frac{\Phi(B(y_0,2r))}{|B(y_0,2r)|}(r^n)^{\frac{p}{p-q}}
$$
and apply to the left side of this inequality the change of variable formula. Denote $B:=B(y_o,r)$:
\begin{multline}
\biggl(\int\limits_{\varphi^{-1}(B)}
|D\varphi(x)|^qw(x)^q\,dx\biggr)^{\frac{p}{p-q}}\\=
\biggl(\int\limits_{\varphi^{-1}(B)\setminus S}
|D\varphi(x)|^qw(x)^q\,dx\biggr)^{\frac{p}{p-q}}
=\biggl(\int\limits_{\varphi^{-1}(B)\setminus (S\cup Z)}
|D\varphi(x)|^qw(x)^q\,dx\biggr)^{\frac{p}{p-q}}\\
=
\biggl(\int\limits_{\varphi^{-1}(B)\setminus (S\cup Z)}
\frac{|D\varphi(x)|^qw(x)^q}{|J(x,\varphi)|}|J(x,\varphi)|\,dx\biggr)^{\frac{p}{p-q}}\\
=\biggl(\int\limits_{B(y_0,r)\setminus \varphi(S)}
\frac{|D\varphi(\varphi^{-1}(y))|^qw(\varphi^{-1}(y))^q}{|J(\varphi^{-1}(y),\varphi)|}\,dy\biggr)^{\frac{p}{p-q}}
\leq C\frac{\Phi(B(y_0,2r))}{|B(y_0,2r)|}(r^n)^{\frac{p}{p-q}}.
\nonumber
\end{multline}
Hence, we obtain the inequality
$$
\biggl(\frac{1}{r^n}\int\limits_{B(y_0,r)\setminus \varphi(S)}
\frac{|D\varphi(\varphi^{-1}(y))|^qw(\varphi^{-1}(y))^q}{|J(\varphi^{-1}(y),\varphi)|}\,dy\biggr)^{\frac{p}{p-q}}
\leq C\frac{\Phi(B(y_0,2r))}{|B(y_0,2r)|}.
$$

Using the Lebesgue theorem on differentiability of the integral and properties of the volume derivative of the countable-additive set functions \cite{RR55,VU04} we obtain
$$
\biggl(
\frac{|D\varphi(\varphi^{-1}(y))|^qw(\varphi^{-1}(y))^q}{|J(\varphi^{-1}(y),\varphi)|}\biggr)^{\frac{p}{p-q}}
\leq C\Phi'(y)\quad\text{ for almost all}\,\,\, y\in\widetilde{\Omega}\setminus\varphi(S).
$$
Integrating of the last inequality on an arbitrary open bounded subset $\widetilde{U}\subset\widetilde{\Omega}$ we obtain
\begin{multline*}
\int\limits_{\widetilde{U}\setminus\varphi(S)}\biggl(
\frac{|D\varphi(\varphi^{-1}(y))|^qw(\varphi^{-1}(y))^q}{|J(\varphi^{-1}(y),\varphi)|}\biggr)^{\frac{p}{p-q}}~dy
\leq C\int\limits_{\widetilde{U}\setminus\varphi(S)}\Phi'(y)~dy\\
\leq C\int\limits_{\widetilde{U}}\Phi'(y)~dy
\leq C \Phi(\widetilde{U})\leq C  K_{p,q}^{\frac{p-q}{pq}}(\varphi;\Omega).
\end{multline*}
Since the choice of $\widetilde{U}\subset\widetilde{\Omega}$ is arbitrary, we have
$$
\int\limits_{\widetilde{\Omega}\setminus\varphi(S)}\biggl(
\frac{|D\varphi(\varphi^{-1}(y))|^qw(\varphi^{-1}(y))^q}{|J(\varphi^{-1}(y),\varphi)|}\biggr)^{\frac{p}{p-q}}~dy
\leq C K_{p,q}^{\frac{p-q}{pq}}(\varphi;\Omega).
$$
Hence
\begin{multline}
\int\limits_\Omega\left(\frac{|D\varphi(x)|^pw(x)^p}
{|J(x,\varphi)|}\right)^{\frac{q}{p-q}}\,dx=
\int\limits_\Omega\frac{(|D\varphi(x)|w(x))^{\frac{pq}{p-q}}}
{|J(x,\varphi)|^{\frac{p}{p-q}}}|J(x,\varphi)|\,dx\\
=
\int\limits_{\widetilde{\Omega}\setminus\varphi(S)}\biggl(
\frac{|D\varphi(\varphi^{-1}(y))|^qw(\varphi^{-1}(y))^q}{|J(\varphi^{-1}(y),\varphi)|}\biggr)^{\frac{p}{p-q}}~dy
\leq C K_{p,q}^{\frac{p-q}{pq}}(\varphi;\Omega).
\nonumber
\end{multline}

{\it Sufficiency.}
Let $f\in L_p^1(\widetilde{\Omega})\cap C^\infty(\widetilde{\Omega})$,  then the composition $f\circ\varphi$
belongs to the class $W^1_{1,\loc}(\Omega)$ and the chain rule holds \cite{Z89}. So, we have
\begin{multline}
\|\varphi^* (f)\vert L_q^1(\Omega,w)\|=\biggl(\int\limits_\Omega|\nabla (f\circ\varphi)|^q w(x)^q\,dx\biggr)^{\frac{1}{q}}\\
\leq
\biggl(\int\limits_\Omega|\nabla f(\varphi(x))|^q|D\varphi(x)|^qw(x)^q\,dx\biggr)^{\frac{1}{q}}\\
=\biggl(\int\limits_{\Omega\setminus Z}|\nabla f(\varphi(x))|^q|J(x,\varphi)|^{\frac{q}{p}}
\frac{|D\varphi(x)|^q w(x)^q}{|J(x,\varphi)|^{\frac{q}{p}}}\,dx\biggr)^{\frac{1}{q}}.
\nonumber
\end{multline}
Using the H\"older inequality we obtain
\begin{multline}
\|\varphi^*(f)\vert L_q^1(\Omega,w)\|=\biggl(\int\limits_\Omega|\nabla (f\circ\varphi)|^q w(x)^q\,dx\biggr)^{\frac{1}{q}}\\
\leq
\biggl(\int\limits_{\Omega\setminus Z}|\nabla f (\varphi(x))|^p|J(x,\varphi)|\,dx\biggr)^{\frac{1}{p}}
\cdot\biggl(\int\limits_{\Omega\setminus Z}\left(\frac{|D\varphi(x)|^p w(x)^p}
{|J(x,\varphi)|}\right)^{\frac{q}{p-q}}\,dx\biggr)^{\frac{p-q}{pq}}\\
\leq
\biggl(\int\limits_{\Omega\setminus (Z\cup S)}|\nabla f (\varphi(x))|^p|J(x,\varphi)|\,dx\biggr)^{\frac{1}{p}}
\cdot\biggl(\int\limits_{\Omega}\left(\frac{|D\varphi(x)|^p w(x)^p}
{|J(x,\varphi)|}\right)^{\frac{q}{p-q}}\,dx\biggr)^{\frac{p-q}{pq}}.
\nonumber
\end{multline}

Now the application of the change of variable formula gives the required inequality
$$
\|\varphi^{\ast}(f) \mid L^1_q(\Omega,w)\|\leq K_{p,q}(\varphi;\Omega) \|f \mid L^1_p(\widetilde{\Omega})\|
$$
for every smooth function $f\in L^1_p(\widetilde{\Omega})$.

To extend the estimate onto all functions $f\in L^1_p(\widetilde{\Omega})$, $1<q< p<\infty$, consider a sequence of smooth functions $f_k\in L^1_p(\widetilde{\Omega})$, $k=1,2,...$, such that $f_k\to f$ in $L^1_p(\widetilde{\Omega})$ and $f_k\to f$ $p$-quasi-everywhere in $\widetilde{\Omega}$ as $k\to\infty$. Since the preimage $\varphi^{-1}(S)$ of the set $S\subset \widetilde{\Omega}$ of $p$-capacity zero has the $(q,w)$-capacity zero, we have $\varphi^{\ast}(f_k)\to \varphi^{\ast}(f)$
$q$-quasi-everywhere in $\Omega$ as $k\to\infty$. This observation leads us to the following conclusion: Extension by continuity of the operator $\varphi^{\ast}$ $L^1_p(\widetilde{\Omega})\cap C^{\infty}(\widetilde{\Omega})$ to $L^1_p(\widetilde{\Omega})$ coincides with the composition operator  $\varphi^{\ast}(f) = f\circ\varphi$.
\end{proof}

\subsection{Weighted composition duality theorem}

Now we prove the weighted composition duality property. It is well known that mappings which are inverse to quasiconformal mappings are quasiconformal also (see, for example, \cite{V71,VGR}). The following theorem refines this property for the case of weighted $(p,q)$-quasiconformal mappings.

\begin{theorem}
\label{DualityTh} Let a homeomorphism $\varphi:\Omega\to{\widetilde{\Omega}}$, $\Omega, \widetilde{\Omega} \subset \mathbb R^n$ have the Luzin $N$-property and generate a bounded
composition operator
\[
\varphi^{\ast}:L^{1}_{p}(\widetilde{\Omega})\to L^{1}_{q}(\Omega,w),\,\,\,n-1 < q < p < \infty,
\]
where $w^q\in A_q$ and $w^{(1-n)\widetilde{q}}\in A_{\widetilde{q}}$. Then the inverse mapping $\varphi^{-1}: \widetilde{\Omega} \to \Omega$ generates a bounded
composition operator
\[
(\varphi^{-1})^{\ast}:L^{1}_{\tilde{q}}({\Omega, v})\to L^{1}_{\tilde{p}}(\widetilde{\Omega}),\,\,
\tilde{q}=\frac{q}{q-n+1},\tilde{p}=\frac{p}{p-n+1},
\]
where $v(x)=w(x)^{1-n}$ such that $v^{\tilde{q}} \in A_{\tilde{q}}$.
\end{theorem}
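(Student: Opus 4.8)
The plan is to reduce the statement to the distortion characterization of Theorem~\ref{CompTh} and to the weak regularity of $\varphi^{-1}$ furnished by Theorem~\ref{inverse}, and then to carry out a direct estimate on smooth functions followed by a density argument. First, since $\varphi$ generates the bounded operator $\varphi^{\ast}:L^{1}_{p}(\widetilde{\Omega})\to L^{1}_{q}(\Omega,w)$ with $n-1<q<p<\infty$, Theorem~\ref{CompTh} tells me that $\varphi$ is a $w$-weighted $(p,q)$-quasiconformal mapping; in particular $\varphi$ has finite distortion and $K_{p,q}(\varphi;\Omega)<\infty$. Together with the assumed Luzin $N$-property and the Muckenhoupt conditions $w^{q}\in A_{q}$, $w^{(1-n)\widetilde{q}}\in A_{\widetilde{q}}$, the regularity Theorem~\ref{inverse} (whose conclusions $\varphi^{-1}\in W^{1}_{1,\loc}(\widetilde{\Omega};\Omega)$ and finite distortion are local, so I apply it on an exhaustion of $\widetilde{\Omega}$ by relatively compact subdomains) shows that $\varphi^{-1}$ is a Sobolev mapping of finite distortion. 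This makes available the chain rule for compositions with $\varphi^{-1}$, the change of variables formula~\eqref{CVF} (clean because $\varphi$ has the global Luzin $N$-property, so $|\varphi(S)|=0$), and the Hadamard type inequality $|D\varphi^{-1}(\varphi(x))|\le |D\varphi(x)|^{n-1}/|J(x,\varphi)|$ holding for almost all $x$.

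Next I estimate the operator on a smooth $g\in L^{1}_{\widetilde{q}}(\Omega,v)$. Using the chain rule $|\nabla(g\circ\varphi^{-1})(y)|\le |\nabla g(\varphi^{-1}(y))|\,|D\varphi^{-1}(y)|$, then changing variables $y=\varphi(x)$ and applying the Hadamard inequality on $\Omega\setminus(Z\cup S)$, I obtain
\[
\|g\circ\varphi^{-1}\mid L^{1}_{\widetilde{p}}(\widetilde{\Omega})\|\le \left(\int\limits_{\Omega} |\nabla g(x)|^{\widetilde{p}}\,\frac{|D\varphi(x)|^{(n-1)\widetilde{p}}}{|J(x,\varphi)|^{\widetilde{p}-1}}\,dx\right)^{1/\widetilde{p}}.
\]
Since $t\mapsto t/(t-n+1)$ is decreasing on $(n-1,\infty)$, we have $\widetilde{p}<\widetilde{q}$, so I factor the integrand as $\bigl(|\nabla g|^{\widetilde{p}}v^{\widetilde{p}}\bigr)\cdot\bigl(|D\varphi|^{(n-1)\widetilde{p}}w^{(n-1)\widetilde{p}}|J|^{1-\widetilde{p}}\bigr)$, using $v^{-\widetilde{p}}=w^{(n-1)\widetilde{p}}$, and apply the H\"older inequality with exponents $\widetilde{q}/\widetilde{p}$ and $\widetilde{q}/(\widetilde{q}-\widetilde{p})$.

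The computational heart is to verify that the second H\"older factor is exactly a power of the distortion constant. Using the conjugacy relations $(n-1)/q+1/\widetilde{q}=1$ and $(n-1)/p+1/\widetilde{p}=1$, one checks the identities $\frac{(n-1)\widetilde{p}\widetilde{q}}{\widetilde{q}-\widetilde{p}}=\frac{pq}{p-q}$ and $\frac{\widetilde{q}(\widetilde{p}-1)}{\widetilde{q}-\widetilde{p}}=\frac{q}{p-q}$, which force
\[
\left(\frac{|D\varphi|^{(n-1)\widetilde{p}}w^{(n-1)\widetilde{p}}}{|J|^{\widetilde{p}-1}}\right)^{\frac{\widetilde{q}}{\widetilde{q}-\widetilde{p}}}=\left(\frac{|D\varphi|^{p}w^{p}}{|J|}\right)^{\frac{q}{p-q}}.
\]
Since $\frac{\widetilde{q}-\widetilde{p}}{\widetilde{q}\widetilde{p}}=\frac{1}{\widetilde{p}}-\frac{1}{\widetilde{q}}=(n-1)\frac{p-q}{pq}$, the factor equals $K_{p,q}(\varphi;\Omega)^{\,n-1}$, which is finite by the first paragraph. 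Hence $\|g\circ\varphi^{-1}\mid L^{1}_{\widetilde{p}}(\widetilde{\Omega})\|\le K_{p,q}(\varphi;\Omega)^{n-1}\,\|g\mid L^{1}_{\widetilde{q}}(\Omega,v)\|$ for every smooth $g$.

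Finally, because $v^{\widetilde{q}}\in A_{\widetilde{q}}$, smooth functions are dense in $L^{1}_{\widetilde{q}}(\Omega,v)$ and suffice to compute the weighted capacity; feeding smooth admissible functions of a condenser $(F_{0},F_{1})\subset\Omega$ into the inequality already proved yields the capacitary estimate $\cp_{\widetilde{p}}^{1/\widetilde{p}}(\varphi(F_{0}),\varphi(F_{1});\widetilde{\Omega})\le K_{p,q}(\varphi;\Omega)^{n-1}\,\cp_{\widetilde{q},v}^{1/\widetilde{q}}(F_{0},F_{1};\Omega)$, the analogue of Lemma~\ref{lem:CapacityDescPP} for $\varphi^{-1}$. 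In particular $\varphi$ maps sets of $(\widetilde{q},v)$-capacity zero onto sets of $\widetilde{p}$-capacity zero, so approximating an arbitrary $g\in L^{1}_{\widetilde{q}}(\Omega,v)$ by smooth functions converging both in norm and quasi-everywhere, I pass to the limit and identify the continuous extension with the composition operator $g\mapsto g\circ\varphi^{-1}$, exactly as in the sufficiency parts of Theorems~\ref{CompThm} and~\ref{CompTh}. I expect the main obstacle to be the measure-theoretic bookkeeping: justifying the change of variables and the Hadamard inequality on the exceptional sets $Z$ and $S$ via finite distortion and the global Luzin $N$-property, and carrying out the exponent identities that collapse the distortion integral into $K_{p,q}(\varphi;\Omega)$.
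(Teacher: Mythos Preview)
Your proof is correct and rests on the same ingredients as the paper's (Theorem~\ref{CompTh} for the distortion of $\varphi$, Theorem~\ref{inverse} for the Sobolev regularity of $\varphi^{-1}$, the Hadamard inequality, change of variables, and the exponent identities linking $(\widetilde p,\widetilde q)$ to $(p,q)$), but the organization is genuinely different. The paper does not estimate $\|g\circ\varphi^{-1}\mid L^1_{\widetilde p}(\widetilde\Omega)\|$ directly; instead it verifies that $\varphi^{-1}$ has finite distortion and that the inverse weighted distortion integral
\[
\int\limits_{\widetilde{\Omega}}\left(\frac{|D\varphi^{-1}(y)|^{\widetilde{q}}}{|J(y,\varphi^{-1})|\,v(\varphi^{-1}(y))^{\widetilde{q}}}\right)^{\frac{\widetilde{p}}{\widetilde{q}-\widetilde{p}}}dy
\]
is finite, by changing variables back to $\Omega$ and collapsing it to $K_{p,q}(\varphi;\Omega)^{pq/(p-q)}$; the boundedness of $(\varphi^{-1})^{\ast}$ is then quoted from the characterization in \cite{GU09,UV08}. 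In effect, your argument re-derives inline the sufficiency direction of that characterization (chain rule on smooth $g$, H\"older, then the density/capacity extension exactly as in Theorems~\ref{CompThm}--\ref{CompTh}). What the paper's route buys is brevity and no need to repeat the density argument; what your route buys is self-containment (no external appeal to \cite{GU09,UV08}) and an explicit operator norm bound $K_{p,q}(\varphi;\Omega)^{n-1}$.
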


\begin{proof} Since $\varphi$ generates the composition operator $\varphi^{\ast}:L^{1}_{p}(\widetilde{\Omega})\to L^{1}_{q}(\Omega,w)$ then by Theorem~\ref{CompTh} the homeomorphism $\varphi\in W^1_{q,\loc}(\Omega,w;\widetilde{\Omega})$, $q>n-1$. Hence by Theorem~\ref{inverse} the inverse mapping  $\varphi^{-1}:\widetilde{\Omega}\to\Omega$ belongs to the Sobolev space $W^1_{1,\loc}(\widetilde{\Omega};\Omega)$.
Now we prove that $\varphi^{-1}$ generates a bounded
composition operator
\[
(\varphi^{-1})^{\ast}:L^{1}_{\tilde{q}}({\Omega, v})\to L^{1}_{\tilde{p}}(\widetilde{\Omega}).
\]
We consider two cases:  $q<p$ and $q=p$.

\noindent
{\bf The case q<p}. By \cite{GU09,UV08} it sufficient to prove that $\varphi^{-1}$ is a mapping of finite distortion and
$$
\int\limits_{\widetilde{\Omega}}\left(\frac{|D\varphi^{-1}(y)|^{\tilde{q}}}{|J(y,\varphi^{-1})| v(\varphi^{-1}(y))^{\tilde{q}}}\right)^{\frac{\tilde{p}}{\tilde{q}-\tilde{p}}}dy <\infty.
$$
Since $\varphi:\Omega\to{\widetilde{\Omega}}$ has the Luzin $N$-property, then $|J(y,\varphi^{-1})| v(\varphi^{-1}(y))^{\tilde{q}}>0$
for almost all $y\in \widetilde{\Omega}$ and so $\varphi^{-1}$ is a mapping of finite distortion. By Theorem \ref{CompTh}
\[
K_{p,q}(\varphi;\Omega)= \left[\int\limits_{\Omega}\left(\frac{|D\varphi(x)|^p w(x)^p }{|J(x,\varphi)|}\right)^{\frac{q}{p-q}}dx\right]^{\frac{p-q}{pq}}< \infty.
\]

Denote by $Z=\{x \in \Omega: |J(x,\varphi)|=0\}$. Then \cite{U93}
\begin{equation}\label{est}
|D\varphi^{-1}(y)| \leq \frac{|D \varphi(x)|^{n-1}}{|J(x,\varphi)|},
\end{equation}
for almost all $x \in \Omega \setminus  Z$ and for almost all $y=\varphi(x) \in \widetilde{\Omega}$ because by the change of variables formula  $|\varphi(Z)|=0$.
Hence, we obtain
\begin{multline*}
\int\limits_{\widetilde{\Omega}}\left(\frac{|D\varphi^{-1}(y)|^{\tilde{q}}}{|J(y,\varphi^{-1})| v(\varphi^{-1}(y))^{\tilde{q}}}\right)^{\frac{\tilde{p}}{\tilde{q}-\tilde{p}}}dy
=
\int\limits_{\widetilde{\Omega} \setminus \varphi(Z)}\left(\frac{|D\varphi^{-1}(y)|^{\tilde{q}}}{|J(y,\varphi^{-1})| v(\varphi^{-1}(y))^{\tilde{q}}}\right)^{\frac{\tilde{p}}{\tilde{q}-\tilde{p}}}dy \\
\leq
\int\limits_{\widetilde{\Omega} \setminus \varphi(Z)}\left( \left(\frac{|D \varphi(\varphi^{-1}(y))|^{n-1}}{|J(\varphi^{-1}(y), \varphi)|}\right)^{\tilde{q}} \frac{1}{|J(y,\varphi^{-1})| v(\varphi^{-1}(y))^{\tilde{q}}} \right)^{\frac{\tilde{p}}{\tilde{q}-\tilde{p}}}dy \\
=
\int\limits_{\widetilde{\Omega} \setminus \varphi(Z)}\frac{|D \varphi(\varphi^{-1}(y))|^{\frac{pq}{p-q}}}{|J(\varphi^{-1}(y), \varphi)|^{\frac{p}{p-q}}} \left(\frac{1}{v(\varphi^{-1}(y))} \right)^{\frac{pq}{(n-1)(p-q)}}dy \\
=
\int\limits_{\Omega \setminus Z}\frac{|D \varphi(x)|^{\frac{pq}{p-q}}}{|J(x,\varphi)|^{\frac{p}{p-q}}} \left(\frac{1}{v(x)} \right)^{\frac{pq}{(n-1)(p-q)}} |J(x,\varphi)|dx \\
\leq
\int\limits_{\Omega}\left(\frac{|D\varphi(x)|^p}{|J(x,\varphi)|v(x)^{p/(n-1)}}\right)^{\frac{q}{p-q}}dx.
\end{multline*}
Putting $v(x)^{-1}=w(x)^{n-1}$ we obtain
\begin{multline*}
\int\limits_{\widetilde{\Omega}}\left(\frac{|D\varphi^{-1}(y)|^{\tilde{q}}}{|J(y,\varphi^{-1})| v(\varphi^{-1}(y))^{\tilde{q}}}\right)^{\frac{\tilde{p}}{\tilde{q}-\tilde{p}}}dy \\
\leq
\int\limits_{\Omega}\left(\frac{|D\varphi(x)|^p w(x)^p}{|J(x,\varphi)|}\right)^{\frac{q}{p-q}}dx
=K_{p,q}^{\frac{pq}{p-q}}(\varphi;\Omega)< \infty.
\end{multline*}
Hence \cite{GU09,UV08} $\varphi^{-1}:\widetilde{\Omega} \to \Omega$ generates a bounded composition operator
\[
(\varphi^{-1})^{\ast}:L^{1}_{\tilde{q}}({\Omega, v})\to L^{1}_{\tilde{p}}(\widetilde{\Omega}),
\]
where $\tilde{q}=q/(q-n+1)$, $\tilde{p}=p/(p-n+1)$.

\noindent
{\bf The case p=q.} By \cite{GU09,UV08} it sufficient to prove that $\varphi^{-1}$ is a mapping of finite distortion and
$$
\ess\sup\limits_{y\in\widetilde{\Omega}}\frac{|D\varphi^{-1}(y)|^{\tilde{p}}}{|J(y,\varphi^{-1})|v(\varphi^{-1}(y))^{\tilde{p}}} <\infty.
$$
Since $\varphi:\Omega\to{\widetilde{\Omega}}$ has the Luzin $N$-property, then $|J(y,\varphi^{-1})| v(\varphi^{-1}(y))^{\tilde{p}}>0$
for almost all $y\in \widetilde{\Omega}$ and so $\varphi^{-1}$ is a mapping of finite distortion. By Theorem \ref{CompThm}
\[
K_{p,p}(\varphi;\Omega)=\ess\sup\limits_{x\in\Omega}\frac{|D\varphi(x)|^p w(x)^p }{|J(x,\varphi)|}<\infty.
\]

By the change of variable formula~\eqref{CVF}, inequality~\eqref{est} and using the equation $v(x)=w(x)^{1-n}$ we obtain
\begin{equation*}
\ess\sup\limits_{y\in\widetilde{\Omega}}\frac{|D\varphi^{-1}(y)|^{\tilde{p}}}{|J(y,\varphi^{-1})|v(\varphi^{-1}(y))^{\tilde{p}}} \\
\leq
\ess\sup\limits_{x\in\Omega}\frac{|D\varphi(x)|^p w(x)^p }{|J(x,\varphi)|} =K_{p,p}(\varphi;\Omega)< \infty.
\end{equation*}

So, \cite{GU09,UV08} $\varphi^{-1}:\widetilde{\Omega} \to \Omega$ generates a bounded composition operator
\[
(\varphi^{-1})^{\ast}:L^{1}_{\tilde{p}}({\Omega, v})\to L^{1}_{\tilde{p}}(\widetilde{\Omega}),
\]
where $\tilde{p}=p/(p-n+1)$ and $v(x)=w(x)^{1-n}$ such that $v^{\tilde{p}} \in A_{\tilde{p}}$.
\end{proof}

In the case of power weights $w(x)=|x|^{\alpha}$ we can reformulate Theorem~\ref{DualityTh} as follows:

\begin{theorem}
Let $-n/q< \alpha <n/\tilde{q}(n-1)$ and let a homeomorphism $\varphi:\Omega\to{\widetilde{\Omega}}$, $\Omega, \widetilde{\Omega} \subset \mathbb R^n$, have the Luzin $N$-property and generate a bounded
composition operator
\[
\varphi^{\ast}:L^{1}_{p}(\widetilde{\Omega})\to L^{1}_{q}(\Omega,|x|^{\alpha q}),\,\,\,n-1 < q < p < \infty.
\]
Then the inverse mapping $\varphi^{-1}: \widetilde{\Omega} \to \Omega$ generates a bounded
composition operator
\[
(\varphi^{-1})^{\ast}:L^{1}_{\tilde{q}}(\Omega, |x|^{\alpha (1-n) \tilde{q}})\to L^{1}_{\tilde{p}}(\widetilde{\Omega}),
\]
where $\tilde{q}=q/(q-n+1)$ and $\tilde{p}=p/(p-n+1)$.
\end{theorem}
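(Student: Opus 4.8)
The plan is to obtain this statement as the power-weight specialization of Theorem~\ref{DualityTh}. I put $w(x)=|x|^{\alpha}$, so that the weight $|x|^{\alpha q}$ displayed in the source space is $w^{q}$ and $L^{1}_{q}(\Omega,|x|^{\alpha q})$ is the weighted Sobolev space $L^{1}_{q}(\Omega,w)$ of Theorem~\ref{DualityTh}; correspondingly the target weight there, $v(x)=w(x)^{1-n}=|x|^{\alpha(1-n)}$, has $\tilde{q}$-th power $v^{\tilde{q}}=|x|^{\alpha(1-n)\tilde{q}}$, which is exactly the weight appearing in the conclusion. Thus, once the two Muckenhoupt hypotheses of Theorem~\ref{DualityTh} are verified for this particular $w$, the assertion follows at once: $\varphi$ has the Luzin $N$-property by assumption and generates the bounded operator $\varphi^{\ast}:L^{1}_{p}(\widetilde{\Omega})\to L^{1}_{q}(\Omega,w)$, whence $(\varphi^{-1})^{\ast}$ is bounded from $L^{1}_{\tilde{q}}(\Omega,v)$ to $L^{1}_{\tilde{p}}(\widetilde{\Omega})$ with the stated exponents $\tilde{q}=q/(q-n+1)$ and $\tilde{p}=p/(p-n+1)$.

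Hence the only thing to check is that $w^{q}\in A_{q}$ and $w^{(1-n)\tilde{q}}\in A_{\tilde{q}}$, and this is precisely the computation already carried out in the proof of Theorem~\ref{inversepower}. I would invoke the standard criterion $|x|^{\beta}\in A_{s}$ if and only if $-n<\beta<n(s-1)$. Applied to $w^{q}=|x|^{\alpha q}$ with $s=q$ it gives $-n/q<\alpha<n(q-1)/q$, and applied to $w^{(1-n)\tilde{q}}=|x|^{\alpha(1-n)\tilde{q}}$ with $s=\tilde{q}$ it gives $-n/q<\alpha<n/(\tilde{q}(n-1))$, where I use $\tilde{q}=q/(q-n+1)$ to simplify both endpoints.

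It then remains to intersect the two intervals. The left endpoints agree and equal $-n/q$. For the right endpoints, comparing $n(q-1)/q$ with $n/(\tilde{q}(n-1))$ shows that they coincide when $n=2$ and that $n(q-1)/q>n/(\tilde{q}(n-1))$ when $n\geq 3$; therefore the binding upper bound is always $n/(\tilde{q}(n-1))$, and the intersection of the two ranges is exactly the hypothesis $-n/q<\alpha<n/(\tilde{q}(n-1))$. Under this condition both $A$-conditions hold, in particular $v^{\tilde{q}}=|x|^{\alpha(1-n)\tilde{q}}\in A_{\tilde{q}}$, so the target is a genuine weighted Sobolev space.

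I expect no real obstacle: the analytic content---the Hadamard inequality for $|D\varphi^{-1}|$, the finite distortion of $\varphi^{-1}$ obtained from the Luzin $N$-property, and the change of variables estimate controlling the inverse distortion integral by $K_{p,q}^{pq/(p-q)}(\varphi;\Omega)$---is entirely absorbed into Theorem~\ref{DualityTh}. The present theorem is a corollary whose proof reduces to the elementary $A_{s}$-membership bookkeeping for power weights, the only mild subtlety being the comparison of the two upper endpoints needed to confirm that the single interval stated in the theorem is the correct intersection.
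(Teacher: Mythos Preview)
Your proposal is correct and matches the paper's approach: the paper presents this theorem as an immediate reformulation of Theorem~\ref{DualityTh} for power weights, with the Muckenhoupt verification $w^{q}=|x|^{\alpha q}\in A_q$ and $w^{(1-n)\tilde q}=|x|^{\alpha(1-n)\tilde q}\in A_{\tilde q}$ already carried out in the proof of Theorem~\ref{inversepower}. You have simply made explicit the interval intersection that the paper leaves to the reader, and your identification of $w(x)=|x|^{\alpha}$ with $v(x)=|x|^{\alpha(1-n)}$ is exactly right.
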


In the case of the two-dimensional Euclidean space $\mathbb R^2$ we have $\widetilde{q}=q'=q/(q-1)$ and $w^q\in A_q$ if and only if $w^{-q'}\in A_{q'}$. So, Theorem~\ref{DualityTh} takes the form:

\begin{theorem}
Let a homeomorphism $\varphi:\Omega\to{\widetilde{\Omega}}$ of plane domains $\Omega, \widetilde{\Omega} \subset \mathbb R^2$ have the Luzin $N$-property and generate a bounded
composition operator
\[
\varphi^{\ast}:L^{1}_{p}(\widetilde{\Omega})\to L^{1}_{q}(\Omega,w),\,\,\,1 < q \leq p < \infty.
\]
Then the inverse mapping $\varphi^{-1}: \widetilde{\Omega} \to \Omega$ generates a bounded
composition operator
\[
(\varphi^{-1})^{\ast}:L^{1}_{q'}({\Omega, v})\to L^{1}_{p'}(\widetilde{\Omega}),\,\, 1/p+1/p'=1,\,\, 1/q+1/q'=1,
\]
where a weight $v(x)=w(x)^{-1}$ such that $v^{q'}\in A_{q'}$.
\end{theorem}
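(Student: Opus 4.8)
The plan is to obtain this statement as the specialization of Theorem~\ref{DualityTh} to dimension $n=2$, so the whole argument reduces to translating the hypotheses and the conclusion through the identity $n-1=1$ and then quoting the already-proved duality theorem. First I would record the arithmetic: with $n=2$ the threshold $n-1<q$ becomes $1<q$, the exponents collapse to $\widetilde{q}=q/(q-n+1)=q/(q-1)=q'$ and $\widetilde{p}=p/(p-n+1)=p/(p-1)=p'$, and the companion weight is $v(x)=w(x)^{1-n}=w(x)^{-1}$. These are precisely the substitutions that turn the target of Theorem~\ref{DualityTh}, namely $(\varphi^{-1})^{\ast}:L^{1}_{\widetilde{q}}(\Omega,v)\to L^{1}_{\widetilde{p}}(\widetilde{\Omega})$, into $(\varphi^{-1})^{\ast}:L^{1}_{q'}(\Omega,v)\to L^{1}_{p'}(\widetilde{\Omega})$. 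The Luzin $N$-property and the boundedness of $\varphi^{\ast}:L^{1}_{p}(\widetilde{\Omega})\to L^{1}_{q}(\Omega,w)$ are hypotheses shared verbatim with Theorem~\ref{DualityTh}, so nothing extra must be verified there.

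Next I would reconcile the Muckenhoupt hypotheses. Theorem~\ref{DualityTh} asks for both $w^{q}\in A_{q}$ and $w^{(1-n)\widetilde{q}}\in A_{\widetilde{q}}$; in dimension two the second condition reads $w^{-q'}\in A_{q'}$, and by Remark~\ref{AP} one has $w^{q}\in A_{q}$ if and only if $w^{-q'}\in A_{q'}$. Hence the two assumptions coincide, the single standing hypothesis $w^{q}\in A_{q}$ suffices, and it is exactly equivalent to the asserted condition $v^{q'}=w^{-q'}\in A_{q'}$. This collapse of the pair of $A$-conditions into one is the only genuinely two-dimensional input, and it is the reason the companion weight condition may be dropped from the statement.

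Finally I would address the endpoint $q=p$, which the present statement admits through $1<q\le p$. Although Theorem~\ref{DualityTh} is phrased for $n-1<q<p$, its proof is carried out in the two separate cases $q<p$ and $q=p$, using Theorem~\ref{CompTh} in the former and Theorem~\ref{CompThm} in the latter, so the equality case is already established and I would simply invoke the relevant branch. Combining the translated exponents and weight with either branch yields the bounded operator $(\varphi^{-1})^{\ast}:L^{1}_{q'}(\Omega,v)\to L^{1}_{p'}(\widetilde{\Omega})$ with $v=w^{-1}$, as claimed. The closest thing to an obstacle is making sure that the $A_{q'}$ membership of $v^{q'}$ is not an independent requirement but genuinely follows from $w^{q}\in A_{q}$; this is settled by the duality of Muckenhoupt classes recorded in Remark~\ref{AP}, so the argument is a direct corollary rather than a new computation.
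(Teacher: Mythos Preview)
Your proposal is correct and matches the paper's approach exactly: the paper presents this theorem as a direct reformulation of Theorem~\ref{DualityTh} for $n=2$, noting that $\widetilde{q}=q'=q/(q-1)$ and that $w^q\in A_q$ if and only if $w^{-q'}\in A_{q'}$, with no separate proof given. Your additional remark about the endpoint $q=p$ being handled within the proof of Theorem~\ref{DualityTh} is a useful clarification that the paper leaves implicit.
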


\vskip 0.2cm

\textbf{Acknowledgments.} The first author was supported by the Ministry of Science and Higher Education of Russia (agreement No. 075-02-2022-884).

\vskip 0.3cm

\vskip 0.3cm

Regional Scientific and Educational Mathematical Center, Tomsk State University, 634050 Tomsk, Lenin Ave. 36, Russia
							
\emph{E-mail address:} \email{vpchelintsev@vtomske.ru}   \\
			
Department of Mathematics, Ben-Gurion University of the Negev, P.O.Box 653, Beer Sheva, 8410501, Israel
							
\emph{E-mail address:} \email{ukhlov@math.bgu.ac.il}

\end{document}